\providecommand{\U}[1]{\protect\rule{.1in}{.1in}}
\newtheorem{theorem}{Theorem}[section]
\newtheorem{lemma}[theorem]{Lemma}
\newtheorem{proposition}[theorem]{Proposition}
\newtheorem{remark}[theorem]{Remark}
\newtheorem{thm}{Theorem}
\newtheorem{prp}[thm]{Proposition}
\newtheorem{prb}[thm]{Problem}
\newcommand{\tr}{\operatorname{tr}}
\newcommand{\Kp}[1]{\mathcal{K}_p\left(#1\right)}
\newcommand{\tnn}{\hat N}
\newcommand{\rr}{\mathbb{R}}
\newcommand{\hh}{\mathbb{H}}
\newcommand{\Hess}{\operatorname{Hess}}
\newcommand{\sect}{\operatorname{Sect}}
\newcommand{\nsect}{{}^N \operatorname{Sect}}
\newcommand{\vol}{\operatorname{Vol}}
\newcommand{\sss}{{\mathbb{S}}}
\newcommand{\tu}{{\tilde{u}}}
\newcommand{\tir}{{\hat{r}}}
\newcommand{\tq}{{\tilde{q}}}
\newcommand{\tv}{{\tilde{v}}}
\newcommand{\tn}{{\tilde{N}}}
\newcommand{\tm}{{\tilde{M}}}
\newcommand{\dive}{\operatorname{div}}
\newcommand{\tnmetr}[2]{\left\langle #1,#2\right\rangle_{\tn}}
\newcommand{\hsmetr}[2]{\left\langle #1,#2\right\rangle_{HS}}
\newcommand{\dist}{\operatorname{dist}}
\begin{document}

\title[On the homotopy Dirichlet problem for $p$-harmonic maps]{On the homotopy Dirichlet problem for $p$-harmonic maps}
\date{\today}
\author{Stefano Pigola}
\address{Dipartimento di Fisica e Matematica\\
Universit\`a dell'Insubria - Como\\
via Valleggio 11\\
I-22100 Como, Italy.}
\email{stefano.pigola@uninsubria.it}

\author{Giona Veronelli}
\address{
Université Paris 13, Sorbonne Paris Cité, LAGA, CNRS (UMR 7539)\\
99 avanue J-B Clément, F-93430, Villetaneuse, France.}
\email{giona.veronelli@gmail.com}

\subjclass[2010]{58E20}
\keywords{$p$-harmonic maps, Dirichlet problem, homotopy classes}

\begin{abstract}In this two papers we deal with the relative homotopy Dirichlet problem for $p$-harmonic maps from compact manifolds with boundary to manifolds of non-positive sectional curvature. Notably, we give a complete solution to the problem in case the target manifold is either compact and a new proof in case it is rotationally symmetric or two dimensional and simply connected. The proof of the compact case uses some ideas of White to define the relative d-homotopy type of Sobolev maps, and the regularity theory by Hardt and Lin. To deal with non-compact targets we introduce a periodization procedure which permits to reduce the problem to the previous one. Also, a general uniqueness result is given.\end{abstract}
\maketitle

\tableofcontents
\part{Compact targets}

\section*{Introduction}

Let $(M,g)$ and $(N,h)$ be Riemannian manifolds of dimensions $m$ and $n$ respectively. Let $u : M\to N$ be a $C^1$ map. The $p$-energy density $e_p(u) : M \to\mathbb R$ is the non-negative
function defined on $M$ as
\[
e_p (u) (x) =\frac 1 p |du|_{HS}^p(x).
\]
Here the differential $du$ is considered as a section of the $(1,1)$-tensor bundle along the map $u$, i.e. $du\in\Gamma(T^\ast M\otimes u^{-1}TN)$ is a vector valued differential $1$-form. Moreover $T^\ast M\otimes u^{-1}TN$ is endowed with its Hilbert-Schmidt scalar product.
If $\Omega\subset M$ is a compact domain, we define the $p$-energy of $u|_\Omega: \Omega \to N$ by
\[
E_p^{\Omega}(u) = \int_\Omega
e_p(u) dV_M.
\]
Let $X$ be a $C^1$ vector field along $u$, i.e. a section of the bundle $u^{-1}TN$, supported in $\Omega$. Then
\[
u_t (x) = {}^N\exp_{u(x)} t X(x).
\]
defines a variation of $u$ which preserves $u$ on $\partial\Omega$.
The map $u: M \to N$ is said to be $p$-harmonic if,
for each compact domain $\Omega\subset M$, it is a stationary point of the $p$-energy functional, that is
\begin{equation*}
\left.\frac d{dt}\right|_{t=0} E_p^\Omega (u_t) = \int_M \hsmetr{|du|^{p-2}du}{dX}dV_M=0.
\end{equation*}
The latter equality corresponds to the weak formulation of the $p$-laplacian equation
\begin{equation}\label{taupp}
\Delta_p u = \dive (|du|^{p-2}du)=0.
\end{equation}
Here $-\dive=\delta$ is the formal adjoint of the exterior differential $d$, with respect to the standard $L^2$ inner product on vector-valued differential $1$-forms on $M$.
In local coordinates, \eqref{taupp} takes the expression
\begin{align*}
(\Delta_p u)^A&=g^{ij}\left(\frac{\partial}{\partial x^j}\left(|du|^{p-2}\frac{\partial u^A}{\partial x^i}\right) - {}^M\Gamma_{ij}^k \frac{\partial u^A}{\partial x^k}|du|^{p-2} \right.\\
&\left.+ {}^N\Gamma_{BC}^A\frac{\partial u^B}{\partial x^i}\frac{\partial u^C}{\partial x^j}|du|^{p-2} \right)=0,
\end{align*}
which, in turn, can be written in the compact form
\begin{align*}
(\Delta_p u)^A=\dive\left(|du|^{p-2}\nabla u^A\right)+|du|^{p-2}\Gamma^A(du,du)=0.
\end{align*}
It's worthwhile to observe that, in case $N\hookrightarrow \mathbb{R}^q$ is isometrically embedded in some Euclidean space $\rr^q$ with second fundamental form $\mathcal{A}$, the above definition of $p$-harmonicity for a  $C^1$ map $u:M\to \rr^q$ is equivalent to the standard notion of weak $p$-harmonicity, that is
\begin{align}\label{weakpharm}
\int\left\vert Du\right\vert ^{p-2}\left\{  Du\cdot D\varphi+\mathcal{A}\left(
Du,Du\right)  \cdot\varphi\left(  x\right)  \right\} =0 ,\quad\forall\varphi\in C_c^\infty(\Omega,\rr^q)
\end{align}
where we have set
\[
Du\cdot D\varphi=g^{ij}\delta_{AB}\frac{\partial u^{A}}{\partial x^{i}}%
\frac{\partial\varphi^{B}}{\partial x^{j}}%
\]%
and
\[
\mathcal{A}\left(  Du,Du\right)\cdot \varphi  =g^{ij}\delta_{CD}\mathcal{A}_{AB}^{C}%
\frac{\partial u^{A}}{\partial x^{i}}\frac{\partial u^{B}}{\partial x^{j}%
}\varphi^{D}.
\]
To see this,  it's enough to take
\[
X_{x}=\left.  D\left(  \Pi_{N}\right)  \right\vert _{u\left(  x\right)  }%
\cdot\varphi\left(  x\right)  \in T_{u\left(  x\right)  }N\subset
\mathbb{R}^{q},
\]
where $\Pi_N$ is the nearest point projection from a tubular neighborhood of $N$ in $\rr^q$.\\
The theory of $p$-harmonic maps between Riemannian manifolds and $p$-energy minimizers has undergone a great development in the last two decades. Among the works on the subject, let us recall for instance \cite{DF-Annales,DGK-Convex,Ga-CVar,HaLi-CPAM,Na-Nonlinear,XiYa-Reine}, dedicated to the regularity theory, and \cite{Wh-Acta,DF-Asymp,PRS-MathZ,Ta-PAMS,We-Indiana,Ma,PV-GeoDed} which are concerned mostly with the connections to the geometry of the manifolds. Beside these, and in the special case of compact manifolds without boundaries, it is also worth to point out the very recent and interesting developments in the parabolic theory of $p$-harmonic maps. For instance, we quote \cite{Hu-Indiana, Mi-EJDE, FR-Indiana, FR-CVar}.\\

This paper is the first step in the investigation of the unique solvability of the homotopy
Dirichlet problem for $p$-harmonic maps into a geodesically complete manifold $N$ of non-positive curvature.

\begin{prb}\label{problem}
Let $\left(  M,g\right)  $ be a compact, $m$-dimensional Riemannian manifold
with smooth boundary $\partial M\neq\emptyset$ and let $N$ be a complete, possibly compact, $n$-dimensional Riemannian manifold without boundary. Assume also that $N$ has non-positive sectional curvature or, more generally, that the universal covering of $N$ supports a strictly convex exhaustion function. For any $p\geq2$ and any given
$f\in C^{0}\left( M,N\right)$, consider the
$p$-Dirichlet problem%
\[
\left\{
\begin{array}
[c]{ll}%
\Delta_{p}u=0 & \text{on }M\\
u=f & \text{on }\partial M.
\end{array}
\right.
\]
Has this $p$-Dirichlet problem a (unique) solution $u\in C^{1\,,\alpha}\left(  \operatorname{int} (M),N\right)  \cap C^{0}\left(  M,N\right)  $ in the homotopy class of $f$ relative to $\partial M$?
\end{prb}

Actually one expects Problem \ref{problem} to have a positive answer. A first evidence in this direction is given by the classical harmonic case. When $p=2$, the Dirichlet problem for harmonic maps into non-positively curved manifolds has been solved by R. Schoen and S.T. Yau, who extended to non-compact targets a previous result due to R. Hamilton; see \cite{Ham} and Theorem 8.5 in Chapter IX of \cite{SY-LN}. Schoen and Yau's proof makes use of Hamilton's heat flow for harmonic maps. In view of the achievements in the papers quoted above, $p$-harmonic heat flow techniques look promising in obtaining a complete solution of Problem \ref{problem} even in case $p\not=2$. However, so far and to the best of our knowledge, no significant progress in this direction has been made yet.\\
In the $p$-harmonic realm, the first interesting result is due to S.W. Wei \cite{We-Indiana} who considered, for a compact target $N$ and boundary datum $f\in C^0(M,N)\cap Lip(\partial M,N)$, a weaker version of Problem \ref{problem}. 
More precisely, in Theorem 7.1 of \cite{We-Indiana}, using a procedure similar to that introduced by F. Burstall in \cite{Burstall-London} and which is based on the homomorphisms induced by the Sobolev maps on the fundamental groups, Wei obtained solutions to the Dirichlet problem in the free homotopy class of the initial datum $f$.
If we try to solve Problem \ref{problem} even for a compact target $N$ of non-positive curvature, it is easily seen that the proof proposed by Wei can not work without changes. In fact the induced homomorphism is not enough to determine completely the relative homotopy type of a given map. In this spirit, an easy counterexample can be constructed by considering the $2$-dimensional torus $N=\mathbb{T}^2=\rr^2/\mathbb{Z}^2$ and the compact manifold with boundary $M\subset \mathbb{T}^2$ given by $M=\{[(x_1,x_2)]\in\mathbb{T}^2: 0\leq x_1 \leq 1/2\}$. Then, choosing $f:M\to N$ defined by $f([(x_1,x_2)])=[(3x_1,x_2)]$, one has that the inclusion map $i:M\hookrightarrow N$ is a harmonic map with $i|_{\partial M}=f|_{\partial M}$, which induces a homomorphism $i_\sharp$ conjugated to $f_\sharp$, but $f$ and $i$
are not homotopic relative to $\partial M$.\\
Nevertheless, since in \cite{Wh-Acta} the relative $[p-1]$-homotopy type of a $W^{1,p}$ map is defined, one can minimize the $p$-energy among maps preserving such a relative $[p-1]$-homotopy type, and show that the regularity theory of \cite{HaLi-CPAM} applies also in this case. Thus, using this strategy we are able to prove the following theorem that works out the program for non-positively curved targets proposed by B. White long ago.

\begin{thm}\label{existence_1}
Let $M$ be a compact manifold with boundary $\partial M\neq\emptyset$, and $N$ be a compact manifold whose universal covering supports a strictly convex exhaustion function. Let $f\in C^0(M,N)\cap Lip(\partial M,N) $. Then, for any $p\geq 2$, there exists a $p$-harmonic map $u\in C^{1,\alpha }(\operatorname{int} (M),N)\cap C^0(M,N)$ which minimizes the $p$-energy among all the $W^{1,p}$ maps in the homotopy class of $f$ relative to $\partial M$. In particular $u$ is the unique solution of Problem \ref{problem} when $\nsect\leq0$.
\end{thm}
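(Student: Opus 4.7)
The plan is to exploit White's framework of relative $[p-1]$-homotopy types for Sobolev maps. Define the admissible class
\[
\mathcal{A}_f = \{u \in W^{1,p}(M,N) : u|_{\partial M} = f|_{\partial M} \text{ in the trace sense},\ [u]_{p-1} = [f]_{p-1}\},
\]
where $[\cdot]_{p-1}$ is the relative $[p-1]$-homotopy type of \cite{Wh-Acta}. This class is nonempty: since $f$ is continuous with Lipschitz boundary values and $N$ is compact, a Lipschitz approximation of $f$ rel $\partial M$, obtained by smoothing $f$ in the interior and composing with the nearest-point projection onto $N \hookrightarrow \mathbb{R}^q$, gives an element of $W^{1,p}(M,N) \cap \mathcal{A}_f$. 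Apply then the direct method to $E_p$ on $\mathcal{A}_f$: a minimizing sequence $\{u_k\}$ is bounded in $W^{1,p}(M,\mathbb{R}^q)$ (the $L^\infty$ bound is free from the compactness of $N$); pass to a weak limit $u_\infty$, which takes values in $N$ by Rellich and a.e.\ convergence. Lower semicontinuity of $\int |du|^p$ together with White's preservation theorem, asserting that the $[p-1]$-homotopy type is stable under weak $W^{1,p}$ convergence, identifies $u_\infty$ as a minimizer in $\mathcal{A}_f$.

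For regularity, I would apply the Hardt--Lin theorem \cite{HaLi-CPAM}. The induced homomorphism $\pi_1(M) \to \pi_1(N)$ is well-defined on $\mathcal{A}_f$ and equals $f_\sharp$, so $u_\infty$ admits a canonical lift $\tilde{u}_\infty : M \to \tilde{N}$ to the universal cover. Let $\psi$ be a strictly convex exhaustion function on $\tilde{N}$: then $\psi \circ \tilde{u}_\infty$ is a $W^{1,p}$ subsolution of a degenerate elliptic inequality of $p$-Laplace type, and the weak maximum principle bounds $\psi \circ \tilde{u}_\infty$ by its boundary values, confining the image to a precompact sublevel set $K \subset \tilde{N}$. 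In this contractible region the hypotheses of \cite{HaLi-CPAM} are met and yield $\tilde{u}_\infty \in C^{1,\alpha}(\operatorname{int}(M), \tilde{N})$, which descends to $u_\infty$. Continuity up to $\partial M$ follows from standard barrier arguments, using the Lipschitz regularity of $f|_{\partial M}$ together with the convex geometry of the sublevel sets of $\psi$.

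It remains to pass from the $[p-1]$-homotopy type to the genuine relative homotopy class and to establish uniqueness. For continuous maps, two maps with the same induced homomorphism on $\pi_1$ and the same lift rel $\partial M$ to the (contractible) universal cover $\tilde{N}$ are homotopic rel $\partial M$ via the fibrewise geodesic homotopy inside $\tilde{N}$; since $u_\infty$ and $f$ share these data by construction, this puts $u_\infty$ in the relative homotopy class of $f$. Uniqueness under $\nsect \leq 0$ follows from a standard convexity argument: given two $p$-harmonic solutions in the same homotopy class, lift both to $\tilde{N}$ with identical boundary trace and consider the pointwise geodesic interpolation $u_s$; by non-positive curvature, $s \mapsto E_p(u_s)$ is convex, and strictly convex unless the two lifts coincide. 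The main obstacle is the first step: one has to verify carefully that White's $[p-1]$-homotopy theory and his preservation results extend to the relative setting with prescribed Lipschitz boundary data, and that the interior regularity of Hardt--Lin, originally phrased for targets with a convex distance-type function, can be invoked on the precompact image in $\tilde{N}$ produced by the exhaustion function.
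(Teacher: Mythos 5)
Your skeleton (minimize the $p$-energy in the relative $[p-1]$-homotopy class of $f$, use White's stability of that class under the convergence supplied by the direct method, invoke Hardt--Lin for regularity, then upgrade to a full relative homotopy) is the paper's strategy, and your Step 1 is essentially the paper's. However, the rest of the argument has genuine gaps. The most serious one is the ``canonical lift $\tilde u_\infty : M \to \tilde N$'': a map $u:M\to N$ lifts to the universal cover of $N$ if and only if $u_\sharp(\pi_1(M))$ is trivial, which there is no reason to assume here. Both your confinement argument and your final homotopy argument are built on this nonexistent lift. (For a compact target no confinement is needed anyway, since the image is automatically bounded; and even where a lift exists, the claim that $\psi\circ\tilde u_\infty$ is a $p$-Laplace subsolution is delicate --- the paper points to \cite{Ve-Manuscripta} for the failure of the naive composition property when $p>2$.) Likewise, your passage from the $[p-1]$-homotopy type to the relative homotopy class via ``same induced homomorphism on $\pi_1$ plus same boundary values'' is precisely what the paper's torus counterexample in the introduction refutes: the induced homomorphism does not determine the relative homotopy type. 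The correct mechanism is obstruction theory: $\tilde N$ is contractible, so $N$ is aspherical and a relative $d$-homotopy with $d\geq 1$ extends cell by cell over the higher skeleta because the obstructions lie in $\pi_k(N)=0$ for $k\geq 2$ (Proposition \ref{th_dHomot}).

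Two further gaps sit in the regularity step. First, Hardt--Lin's interior theory applies to maps that minimize \emph{locally}, i.e.\ among competitors agreeing with them outside small sets with no homotopy constraint; a minimizer constrained to a homotopy class does not obviously have this property, since a local perturbation could a priori change the $[p-1]$-homotopy type. The paper's Proposition \ref{pr_perturbing} is devoted exactly to producing, around each point (including points of $M^d$ and of $\partial M$), an open set on which perturbations preserve the type. Second, Hardt--Lin's partial regularity still leaves a singular set; to remove it one must show that $N$ admits no non-constant $p$-minimizing tangent maps. This is where the strictly convex exhaustion function on $\tilde N$ actually enters --- through an induction on the dimension of the tangent map combined with the Wei--Yau constancy theorem (Proposition \ref{prop_p-MTM}) --- not through a maximum principle. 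Your uniqueness sketch via convexity of the $p$-energy along geodesic homotopies is the standard route and is fine; the paper simply cites Wei's theorem for it.
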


We point out that the uniqueness part of the theorem is a consequence of previous results by  W. Wei, \cite{We-Indiana}. However, at the end of the paper, and in view of future developments in the non-compact case, we shall include a very general uniqueness result that works for homotopic (rel $\partial M$) $p$-harmonic maps into a complete manifold of non-positive curvature. This is the second main result of the paper which represents a new contribution to the difficult comparison theory for p-harmonic maps in the presence of topology: in fact, it extends to domains with boundary the comparison result obtained in \cite{V}.

\begin{thm}\label{th_uniqueness}
Let $\left(  M,g\right)  $ be a compact, $m$-dimensional Riemannian manifold
with smooth boundary $\partial M\neq\emptyset$ and let $N$ be a complete manifold such that $\nsect\leq 0$. Let $f\in C^0(M,N)$. Then, for any $p\geq 2$, there is at most a $C^1$ solution to the homotopy $p$-Dirichlet Problem \ref{problem} with datum $f$.
\end{thm}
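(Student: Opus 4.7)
The plan is to pass to the universal cover of $N$ to reduce to a Cartan-Hadamard target, and then exploit the convexity of the $p$-energy along the geodesic homotopy joining the two candidate solutions. Suppose $u,v\in C^1(M,N)$ both solve the homotopy $p$-Dirichlet problem and fix a continuous relative homotopy $F:[0,1]\times M\to N$ between them. I would fix any lift $\tilde u:M\to\tilde N$ of $u$ to the universal cover $\pi:\tilde N\to N$, and for each $x\in M$ define $\tilde v(x)$ as the endpoint of the $\pi$-lift of the path $t\mapsto F(t,x)$ starting at $\tilde u(x)$. Continuity of $F$ and uniqueness of path-lifting make $\tilde v:M\to\tilde N$ continuous; since $F(\cdot,x)$ is constant for $x\in\partial M$, its lift is constant, whence $\tilde u=\tilde v$ on $\partial M$. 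Being local lifts through a local isometry, $\tilde u$ and $\tilde v$ are $C^1$ and $p$-harmonic, and $\tilde N$ is Cartan-Hadamard because simply connected and of non-positive sectional curvature.

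On such a target the map $\exp_{\tilde u(x)}$ is a global diffeomorphism and $W(x):=\exp_{\tilde u(x)}^{-1}\tilde v(x)$ is a $C^1$ vector field along $\tilde u$ with $W|_{\partial M}\equiv 0$. I would then consider the geodesic homotopy $H(t,x):=\exp_{\tilde u(x)}(tW(x))$. For each $x\in M$ and every orthonormal frame $\{e_i\}$ of $T_xM$, the curve $t\mapsto J_i(t):=dH_t(e_i)|_x$ is a Jacobi field along the geodesic $H(\cdot,x)$, and the Jacobi equation combined with $\nsect\leq 0$ yields $\frac{d^2}{dt^2}|J_i|^2\geq 2|\nabla_t J_i|^2\geq 0$. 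Summing in $i$ shows that $t\mapsto|dH_t|^2$ is convex and non-negative; since $s\mapsto s^{p/2}$ is convex and non-decreasing on $[0,\infty)$ for $p\geq 2$, so is $t\mapsto|dH_t|^p$ pointwise in $x$. Integration over $M$ then gives convexity of $\mathcal{E}(t):=E_p^M(H_t)$ on $[0,1]$.

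The endpoint information comes from $p$-harmonicity. Using a standard density/cutoff argument to test the weak $p$-harmonic equation for $\tilde u$ against $W$ (which is $C^1$ on $M$ with zero trace on $\partial M$, and satisfies $|W|\lesssim\dist(\cdot,\partial M)$ near the boundary), one obtains $\mathcal{E}'(0)=0$; the symmetric argument, with the variation reversed and tested against $\tilde v$, yields $\mathcal{E}'(1)=0$. A convex function on $[0,1]$ with vanishing derivatives at both endpoints is constant, so $\int_0^1\mathcal{E}''(t)\,dt=0$. The integrand of $\mathcal{E}''$ is the pointwise second derivative of $\tfrac{1}{p}|dH_t|^p$, which is non-negative and jointly continuous in $(t,x)$ thanks to the $C^1$ regularity of $\tilde u,\tilde v$; it therefore vanishes identically on $[0,1]\times M$. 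Unpacking the chain of inequalities above forces $\nabla_t J_i\equiv 0$ for every $i$, and evaluating at $t=0$ gives $\nabla^{\tilde u}_{e_i}W\equiv 0$, i.e., $W$ is parallel along $\tilde u$. Since $\partial M\neq\emptyset$, parallel transport of $W$ along curves from the boundary to any interior point (working component by component if $M$ is disconnected) propagates $W|_{\partial M}=0$ to $W\equiv 0$, so $\tilde u=\tilde v$ and thus $u=v$.

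The main obstacle I foresee is the rigidity step: turning the integrated identity $\int_0^1\mathcal{E}''=0$ into the pointwise parallel equation $\nabla_t dH_t\equiv 0$ requires joint continuity of all quantities in the convexity chain $|J|^2\mapsto(|J|^2)^{p/2}$, which is exactly why the $C^1$ hypothesis appears in the statement. A secondary technicality is the density argument used to legitimately test the weak $p$-harmonic equation for $\tilde u$ against $W$, given that $W$ need not be compactly supported in $\operatorname{int}(M)$; this is handled by truncating $W$ with a cutoff $\chi_{\varepsilon}$ vanishing in an $\varepsilon$-neighborhood of $\partial M$ and using $|W|\lesssim\dist(\cdot,\partial M)$ near the boundary to pass to the limit.
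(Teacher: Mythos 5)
Your overall strategy --- the geodesic homotopy between the two solutions together with convexity of $t\mapsto E_p(H_t)$ and the vanishing of the first variation at both endpoints --- is a legitimate route, genuinely different from the paper's argument (which follows Schoen--Yau and Veronelli: it builds the ``mixed'' vector field $X=\bigl[d\hat r^2\circ J\bigr]^\sharp$ from the function $\hat r^2$ induced by $\operatorname{dist}^2_{\tilde N}$ on $(\tilde N\times\tilde N)/\pi_1(N)$ and integrates a single divergence identity, rather than a second variation). However, there is a genuine gap at the very first step: you ``fix any lift $\tilde u\colon M\to\tilde N$ of $u$''. Such a lift exists if and only if $u_\sharp\bigl(\pi_1(M)\bigr)$ is trivial, which fails in general (e.g.\ $M=\mathbb S^1\times[0,1]$, $N$ a flat torus, $u$ a non-nullhomotopic map). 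Without the lifts, $W(x)=\exp_{\tilde u(x)}^{-1}\tilde v(x)$ is not defined on $M$, and everything downstream --- the homotopy $H_t$, the energy $\mathcal E(t)$ as an integral over the compact manifold $M$, the boundary condition $W|_{\partial M}=0$ --- collapses. This is precisely the point where the paper's proof does its real work: it lifts $u$, $v$ and the relative homotopy to the universal cover $\tilde M$ of the \emph{domain}, records the equivariance $\tilde u(\gamma\tilde q)=u_\sharp(\gamma)\tilde u(\tilde q)$, $\tilde v(\gamma\tilde q)=v_\sharp(\gamma)\tilde v(\tilde q)$ with $u_\sharp\equiv v_\sharp$, and then checks that the relevant objects descend to $M$.

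The gap is repairable along the same lines: define $\tilde W=\exp_{\tilde u}^{-1}\tilde v$ on $\tilde M$; since $u_\sharp(\gamma)$ is an isometry of $\tilde N$ carrying the geodesic from $\tilde u(\tilde q)$ to $\tilde v(\tilde q)$ to the one from $\tilde u(\gamma\tilde q)$ to $\tilde v(\gamma\tilde q)$, both $\tilde W$ (as a section of $u^{-1}TN$) and the densities $|dH_t|$ are $\pi_1(M)$-invariant and descend to $M$, after which your convexity and cutoff arguments go through essentially as written (the endpoint rigidity is also fine: $|dH_t|^p$ affine in $t$ with $|dH_t|^2$ convex forces $|dH_t|^2$ constant in $t$, hence $\nabla_t J_i\equiv0$, and a parallel section vanishing on $\partial M$ vanishes on every component meeting $\partial M$). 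But as submitted, the proof assumes away the only nontrivial topological point of the theorem, so it is not complete.
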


\section{Existence of solutions}

The proof of Theorem \ref{existence_1}, that will be presented in Subsection \ref{subsection_proofThB}, requires some preliminary results of very different nature that are collected in the next three subsections.

\subsection{Lipschitz approximation in relative homotopy class} \label{subsection_lipApprox}

According to Section 4 in \cite{Wh-Acta} a boundary datum $f \in C^0(M,N) \cap Lip(\partial M,N)$ has a $C^0(M,N) \cap W^{1,p}(M,N)$ representative in its homotopy class relative to $\partial M$. This fact will be extensively used in the proof of Theorem \ref{existence_1}. Actually, the representative can be chosen to be $Lip(M,N)$; see Proposition \ref{prop_retraction} below. This follows by combining the standard Whitney approximation result with the next Lemma which is implicitly contained in \cite{Wh-Acta}, see especially the proof of Theorem 4.1 there.

\begin{lemma}
\label{th_retraction}Let $\left(  M,g\right)  $ be a compact $m$-dimensional Riemannian manifold with boundary $\partial
M\neq\emptyset$. Then, there exists a Lipschitz
map $u:M\rightarrow M$ satisfying the following conditions:

\begin{enumerate}
\item[(a)] $u$ is homotopic to the identity map $\mathrm{id}_{M}$ relative to
$\partial M$.

\item[(b)] $u$ is a smooth retraction of a collar neighborhood $V$ of
$\partial M$ onto $\partial M$. Moreover, the neighborhood $V$ can be chosen as small as desired.

\item[(c)] $u$ is a diffeomorphism of $M\setminus V$ onto $M$.

\end{enumerate}
\end{lemma}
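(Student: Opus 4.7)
The plan is to build $u$ via the collar neighborhood theorem and a reparametrization of the transverse direction. First I would invoke the collar theorem to produce a smooth embedding $\phi\colon\partial M\times[0,1]\to M$ with $\phi(x,0)=x$, whose image $C$ is a compact tubular neighborhood of $\partial M$. For any $\delta\in(0,1)$, the set $V=\phi(\partial M\times[0,\delta))$ is a collar neighborhood of $\partial M$ that shrinks onto $\partial M$ as $\delta\downarrow 0$ and hence can be made as small as desired.

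The idea is to collapse the inner slab $V$ onto $\partial M$, to stretch the outer annular portion $\phi(\partial M\times[\delta,1])$ of $C$ so that it fills the entire collar, and to leave $u$ equal to the identity outside $C$. Concretely, I would fix a continuous, piecewise smooth function $\psi_{\delta}\colon[0,1]\to[0,1]$ satisfying
\begin{enumerate}
\item[(i)] $\psi_{\delta}\equiv 0$ on $[0,\delta]$;
\item[(ii)] $\psi_{\delta}|_{[\delta,1]}$ is a smooth, strictly increasing diffeomorphism onto $[0,1]$;
\item[(iii)] $\psi_{\delta}(t)=t$ in a neighborhood of $t=1$,
\end{enumerate}
and then set
\[
u(y)=\begin{cases}\phi\bigl(x,\psi_{\delta}(t)\bigr) & \text{if } y=\phi(x,t)\in C,\\ y & \text{if } y\in M\setminus C.\end{cases}
\]
By (iii) the two definitions agree, together with all derivatives, on $\phi(\partial M\times\{1\})$, so $u$ is continuous on $M$ and smooth across the inner face of the collar. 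Now (b) is immediate since $u(\phi(x,t))=x$ on $V$ by (i); (c) holds because $u$ restricts to the smooth diffeomorphism $(x,t)\mapsto(x,\psi_{\delta}(t))$ from $\phi(\partial M\times[\delta,1])$ onto $C$ and to the identity on $M\setminus C$, the two pieces gluing smoothly along $\phi(\partial M\times\{1\})$ by (iii) and together covering $M$; finally (a) holds via the continuous homotopy $H_s(\phi(x,t))=\phi\bigl(x,(1-s)\psi_{\delta}(t)+st\bigr)$ on $C$, extended by the identity on $M\setminus C$, which fixes $\partial M$ throughout since $\psi_{\delta}(0)=0$.

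The only delicate point, and the reason the statement asks only for Lipschitz regularity, occurs at the interface $\phi(\partial M\times\{\delta\})$ separating $V$ from the diffeomorphic piece. Condition (i) forces the $t$-derivative of $u$ to vanish from the $V$-side, whereas (ii) requires $\psi_{\delta}'(\delta^{+})>0$ so that $u|_{M\setminus V}$ is genuinely a diffeomorphism onto $M$. This unavoidable jump in the first derivative across $\phi(\partial M\times\{\delta\})$ prevents any smooth extension, but since $\psi_{\delta}$ and $\psi_{\delta}'$ remain bounded on both sides, the resulting $u$ is globally Lipschitz, as required.
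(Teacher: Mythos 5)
Your construction is correct and follows essentially the same strategy as the paper: collapse an inner collar slab onto $\partial M$, stretch the adjacent annular region diffeomorphically to fill the whole collar, extend by the identity, and accept the unavoidable first-derivative jump at the interface (which is exactly why only Lipschitz regularity is claimed). The one small difference is that you arrange smooth gluing at the outer face of the collar by requiring $\psi_{\delta}(t)=t$ near $t=1$, whereas the paper glues the identity to a linear stretch and then invokes a smoothing theorem (Hirsch) along that hypersurface; your variant is slightly more self-contained but the argument is the same.
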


\begin{proof}
Fix an open collar neighborhood $W$ of $\partial M$ and let $\alpha:\partial
M\times\lbrack0,+\infty)\rightarrow W$ be a diffeomorphism satisfying
$\alpha\left(  x,0\right)  =x$. Note that, if we set $\alpha^{-1}\left(
x\right)  =\left(  \alpha_{1}\left(  x\right)  ,\alpha_{2}\left(  x\right)
\right)  $, the map
\[
r\left(  x\right)  =\alpha\left(  \alpha_{1}\left(  x\right)  ,0\right)
:W\rightarrow\partial M
\]
is a natural smooth retraction of $W$ onto $\partial M$. Now, for any $0\leq
s<t$, let%
\[%
\begin{array}
[c]{l}%
\mathcal M_{s}^{t}=\alpha\left(  \partial M\times\lbrack s,t]\right)  \\
\mathcal M_{t}=\overline{M\backslash \mathcal M_{0}^{t}}\\
\mathcal B_{t}=\alpha\left(  \partial M\times t\right)
\end{array}
\]
so that%
\[
\mathcal M_{t}\cap \mathcal M_{s}^{t}=\mathcal B_{t}.
\]
Observe that $\mathcal M_{0}=M$ can be realized as the obvious gluing%
\[
M=\mathcal M_{2}\cup_{\mathrm{id}_{\!\mathcal B_{2}}}\mathcal M_{0}^{2}%
\]
whereas%
\[
\mathcal M_{1}=\mathcal M_{2}\cup_{\mathrm{id}_{\!\mathcal B_{2}}}\mathcal M_{1}^{2}.
\]
On the other hand, $\mathcal M_{1}^{2}$ is diffeomorphic to $\mathcal M_{0}^{2}$ via%
\[
\beta\left(  x\right)  =\alpha\left(  \alpha_{1}\left(  x\right)  ,2\alpha
_{2}\left(  x\right)  -2\right)
\]
which keeps $\mathcal B_{2}$ fixed, i.e., $\beta=\mathrm{id}_{\!\mathcal B_{2}}$ on $\mathcal B_{2}$. It
follows that the homeomorphism $\gamma:\mathcal M_{1}\rightarrow M$ such that%
\begin{equation}\label{retractiongamma}
\gamma\left(  x\right)  =\left\{
\begin{array}
[c]{ll}%
\mathrm{id}_{\!\mathcal M_{2}}\left(  x\right)  , & x\in \mathcal M_{2}\\
\beta\left(  x\right)  , & x\in \mathcal M_{1}^{2}%
\end{array}
\right.
\end{equation}
smooths out along the closed submanifold $\mathcal B_{2}=\mathcal M_{2}\cap \mathcal M_{1}^{2}$ and gives rise to
a global diffeomorphism $\Gamma:\mathcal M_{1}\rightarrow M$ satisfying $\Gamma=\gamma$
\ outside a small neighborhood of $\mathcal B_{2}$; see e.g. Theorem 1.9 of \cite{Hi}.
In particular,%
\[
\Gamma\left(  x\right)  =r\left(  x\right)  \text{, on }\mathcal B_{1}.
\]
To conclude, we put%
\[
V=\alpha\left(  \partial M\times\lbrack0,1)\right)  \subset W
\]
and define $u:M\rightarrow M$ by setting%
\[
u\left(  x\right)  =\left\{
\begin{array}
[c]{ll}%
r\left(  x\right)  , & x\in \mathcal M_{0}^{1}=\bar{V}\\
\Gamma\left(  x\right)  , & x\in \mathcal M_{1}.
\end{array}
\right.
\]
\end{proof}

\begin{remark}\label{rem_bilip_loc}
\rm{
The same proof works if $M$ is a non-compact manifold with compact boundary $\partial M$. Clearly, in this case, $u$ is only $Lip_{loc}(M,N)$.
In fact, note that the assumption that $\partial M$ is compact is just used to smoothing out the homeomorphism $\gamma$ along the  submanifold $B_2$ and this is needed to obtain $u$ satisfying the further condition $(c)$ in the statement of Lemma. If we are not interested in smooth regularity, we can use directly $\gamma$, whose construction does not require any compactness assumption on $\partial M$. In this case, condition $\mathrm{(c)}$  has to be replaced by
\begin{enumerate}
\item[(c)'] $u$ is a $BiLip_{loc}$-homeomorphism of $M\setminus V$ onto $M$.
\end{enumerate}
}
\end{remark}

\begin{proposition}
\label{prop_retraction}Keeping the notation and assumptions of the previous Lemma, suppose we are given a map
$f\in C^{0}\left(  M,N\right)  \cap Lip\left(  \partial M,N\right)  $, where $(N,h)$ is
an $n$-dimensional Riemannian manifold without boundary.
Then, there exists $F\in Lip\left(  M,N\right)  $ which is homotopic to
$f$ relative to $\partial M$.
\end{proposition}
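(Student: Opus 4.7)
The strategy is first to push $f$ through the Lipschitz retraction $u$ supplied by Lemma \ref{th_retraction}, so that the resulting map becomes Lipschitz on a collar of $\partial M$ (with its values there determined entirely by the Lipschitz boundary datum $f|_{\partial M}$), and then to approximate what remains in the interior by a smooth map, leaving the collar values untouched.

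Concretely, apply Lemma \ref{th_retraction} with a small collar $V = \alpha(\partial M \times [0,1))$ and set $g := f \circ u \in C^0(M,N)$. The homotopy $u_t$ from $u$ to $\mathrm{id}_M$ relative to $\partial M$ produces a continuous homotopy $H_t := f \circ u_t$ from $g$ to $f$ relative to $\partial M$. Moreover, on $V$ one has $g = f|_{\partial M} \circ r$, which is Lipschitz as the composition of the smooth collar retraction $r$ with the Lipschitz boundary datum; choosing an open collar $V' \Subset V$ we thus obtain Lipschitz regularity of $g$ on a neighborhood of $\bar V'$.

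It remains to promote $g$ to a globally Lipschitz map without modifying it on $\bar V'$. Embed $N \hookrightarrow \rr^q$ isometrically by Nash's theorem, and fix a tubular neighborhood $\mathcal T \subset \rr^q$ of the compact image $g(M)$ with smooth nearest-point projection $\pi : \mathcal T \to N$. Choose a smooth cut-off $\chi : M \to [0,1]$ which vanishes on a neighborhood of $\bar V'$ and equals $1$ outside $V$. Regarding $g$ as a continuous map into $\rr^q$, a standard mollification on finitely many interior charts covering $\supp(\chi)$ produces a smooth $\tilde g$ with $\sup |\tilde g - g| < \varepsilon$, and the convex combination $g_\varepsilon := (1-\chi)\, g + \chi\, \tilde g : M \to \rr^q$ then equals $g$ on $\bar V'$, is smooth where $\chi \equiv 1$, and is Lipschitz on the transition zone (contained in $V$) since both $g$ and $\tilde g$ are Lipschitz there. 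For $\varepsilon$ small one has $g_\varepsilon(M) \subset \mathcal T$, and $F := \pi \circ g_\varepsilon : M \to N$ is then a globally Lipschitz map agreeing with $f$ on $\partial M$. The straight-line homotopy $s \mapsto \pi((1-s)\, g + s\, g_\varepsilon)$ inside $\mathcal T$ yields $g \simeq F$ relative to $\bar V'$, hence relative to $\partial M$; concatenation with $g \simeq f$ rel $\partial M$ produces the desired relative homotopy $F \simeq f$.

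The delicate point is matching the smooth interior approximation with the only-Lipschitz collar values: this is resolved by having $\chi$ vanish on a full neighborhood of $\bar V'$, so that the transition zone where $\chi$ varies is contained in the Lipschitz region of $g$, and by performing the mollification in the ambient $\rr^q$ together with the smooth projection $\pi$ to recover values in $N$.
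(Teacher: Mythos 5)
Your proposal is correct and follows essentially the same route as the paper: precompose $f$ with the Lipschitz retraction $u$ of Lemma \ref{th_retraction} so that the resulting map is Lipschitz on a collar (where it equals $f|_{\partial M}\circ r$), then perform a relative approximation keeping a smaller collar fixed, and concatenate the two homotopies. The only difference is that the paper invokes the relative Whitney approximation theorem as a black box, whereas you unpack it explicitly (Nash embedding, mollification, cut-off, nearest-point projection), which is exactly how that theorem is proved.
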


\begin{proof}
Let $u:M\rightarrow M$ be the map defined in Lemma \ref{th_retraction}
and consider $\overline{f}=f\circ u:M\rightarrow M$. Then, $\overline{f}\in
C^{0}\left(  M\right)  \cap Lip_{loc}\left(  V\right)  $ where $V$ is a collar
neighborhood of $\partial M$ which retracts to $\partial M$ via $u$. In particular
$f=\overline{f}$ on $\partial M.$ Let $W$ be a smaller collar neighborhood of
$\partial M$ such that $\overline{W}\subset V$. Then, we can apply the
standard approximation procedure by H. Whitney keeping $\overline{W}$ fixed (see e.g. \cite{Le})
and obtain a  Lipschitz map $F:M\rightarrow N$ with the desired
properties. More precisely: (i) $F$ is smooth on $M\backslash V$; (ii) $F=\overline{f}$ on $\overline{W}$ and (iii) $F$ is homotopic to $\overline{f}$, relative to $\partial M$.
\end{proof}

\begin{remark}
\rm{
Using the version of Lemma \ref{th_retraction} observed in Remark \ref{rem_bilip_loc}, we can skip the assumption that $M$ is compact and obtain that any $f\in C^{0}\left(  M,N\right)  \cap Lip_{loc}\left(  \partial M,N\right)  $ has a representative $F\in Lip_{loc}\left(  M,N\right)  $ in its homotopy class relative to $\partial M$.
}
\end{remark}

\subsection{$p$-Minimizing tangent maps}\label{subsection_pMTM}

Another key ingredient in the proof of Theorem \ref{existence_1} is the fact that a manifold does not contain any $p$-minimizing tangent sphere provided its universal covering supports a smooth strictly convex function. This is the content of Proposition \ref{prop_p-MTM} that will be vital to apply the full regularity theory by Hardt-Lin. The conclusion for $p=2$ was initially obtained in \cite{ScUh-JDG}. Nakauchi pointed out how to extend Schoen and Uhlenbeck's iteration process to $p>2$, \cite{Na-Nonlinear}. The general result for $p>2$ and target supporting a strictly convex function was also observed in \cite{WeYa-JGA}.

Due to its importance, we shall provide a detailed and complete proof.\\

First, we introduce the Sobolev spaces of maps that will be used throughout the paper. According to the Nash embedding theorem, we can assume
that there is an isometric embedding $i:N\hookrightarrow\mathbb R^q$ of $N$ into some Euclidean space. For all maps $u:M\to N$ we define $\check u:=i\circ u:M\to\rr^q$. For $p>1$, we denote by $W^{1,p}_{loc}(M,\mathbb R^q)$ (resp. $W^{1,p}(M,\mathbb R^q)$) the Sobolev space of maps $v:M\to\mathbb R^q$ whose component functions and their first weak derivatives are in $L^p_{loc}(M)$ (resp. in $L^p(M)$). Moreover we define
\begin{align*}
&W^{1,p}_{loc}(M,N):=\{v\in W^{1,p}_{loc}(M,\mathbb R^q): v(x)\in N \textrm{ for a.e. } x\in M\},\\
&W^{1,p}(M,N):=\{v\in W^{1,p}(M,\mathbb R^q): v(x)\in N \textrm{ for a.e. } x\in M\}.
\end{align*}
Finally we will say that $v\in W^{1,p}(M,N)$ has boundary trace $f$ if $\check v-\check f\in W^{1,p}_0(M,\rr^q)$, where $W^{1,p}_0(M,\rr^q)$ denotes the closure of $C^{\infty}_c(M,\rr^q)$ in the $W^{1,p}(M,\rr^q)$ norm.

Following \cite[p. 572]{HaLi-CPAM} we say that a map $\bar\psi\in W_{loc}^{1,p}(\rr^{l+1},N)$ is a $p$-minimizing tangent map ($p$-MTM) from $\rr^{l+1}$ to $N$ if $\bar\psi$ minimizes the $p$-energy on compact sets and $\bar\psi$ is homogeneous of degree $0$, that is, $\partial\bar\psi/\partial r=0$ a.e., $r$ being the radial coordinate. Clearly, here we are thinking of $\bar \psi$ as an $\rr^q$-valued map once $N$ is isometrically embedded in the Euclidean space $\rr^q$. Note that $\bar\psi\in W_{loc}^{1,p}(\rr^{l+1},N)$ is homogeneous of degree $0$ if and only if there exists $\psi\in W^{1,p}(\mathbb S^l,N)$ such that
\begin{align}\label{homo_2}
\bar\psi(x):=\psi\left(\frac x {|x|}\right), \quad\forall x\neq 0.
\end{align}
Indeed, condition (\ref{homo_2}) clearly implies that $\partial\bar\psi/\partial r=0$ a.e. On the other hand, if $\bar\psi\in W^{1,p}_{loc}(\rr^{l+1},N)$ is homogeneous of degree $0$, since by Fubini's theorem $\bar\psi(\cdot,\theta)\in W^{1,p}_{loc}(\mathbb{R}_{> 0},N)$ for a.e. $\theta\in\sss^l$, we deduce that $\bar\psi(\cdot,\theta)$ is constant a.e. Therefore, $\psi(\theta)=\bar \psi(\cdot,\theta)$ satisfies \eqref{homo_2} and, again by Fubini, it is $W^{1,p}(\sss^l,N)$.

\begin{lemma}\label{lem_p-harm}
Assume that $\bar\psi\in W_{loc}^{1,p}(\mathbb R^{l+1},N)$ satisfies (\ref{homo_2})
for some $\psi\in W^{1,p}(\mathbb S^l,N)$. Then $\bar\psi:\rr^{l+1}\to N$ is weakly $p$-harmonic (in the sense of (\ref{weakpharm})) if and only if $\psi:\mathbb{S}^{l}\to N$ is weakly $p$-harmonic.
\end{lemma}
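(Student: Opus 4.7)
The plan is to pass to polar coordinates $(r,\theta)\in (0,\infty)\times\sss^l$ on $\rr^{l+1}\setminus\{0\}$ and exploit the degree-zero homogeneity of $\bar\psi$ to factor the weak $p$-harmonic identity (\ref{weakpharm}) into a radial part and a spherical part, reducing the identity for $\bar\psi$ on $\rr^{l+1}$ to the identity for $\psi$ on $\sss^l$.

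First I would compute the ingredients of (\ref{weakpharm}) in these coordinates. By (\ref{homo_2}) we have $\partial\bar\psi/\partial r\equiv 0$, so the standard splitting of the Euclidean derivative into radial and tangential components (with respect to the orthonormal frame $\partial_r,\,e_1/r,\dots,e_l/r$ derived from an orthonormal frame $\{e_i\}$ on $\sss^l$) yields
\[
|D\bar\psi|^2(r,\theta)=\frac{|\nabla_{\sss^l}\psi|^2(\theta)}{r^{2}},\qquad D\bar\psi\cdot D\varphi=\frac{1}{r^2}\,\nabla_{\sss^l}\psi(\theta)\cdot\nabla_{\sss^l}\varphi(r,\cdot)(\theta),
\]
and, expanding $\mathcal{A}(D\bar\psi,D\bar\psi)$ in the same frame, $\mathcal{A}(D\bar\psi,D\bar\psi)=r^{-2}\mathcal{A}(\nabla_{\sss^l}\psi,\nabla_{\sss^l}\psi)$. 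Plugging these into the bulk integrand and using $dx=r^l\,dr\,d\theta$, every radial factor collects into a single power of $r$:
\[
\int_{\rr^{l+1}}|D\bar\psi|^{p-2}\bigl(D\bar\psi\cdot D\varphi+\mathcal{A}(D\bar\psi,D\bar\psi)\cdot\varphi\bigr)\,dx=\int_0^\infty r^{\,l-p}\,I_\psi(r,\varphi)\,dr,
\]
where
\[
I_\psi(r,\varphi)=\int_{\sss^l}|\nabla_{\sss^l}\psi|^{p-2}\bigl(\nabla_{\sss^l}\psi\cdot\nabla_{\sss^l}\varphi(r,\cdot)+\mathcal{A}(\nabla_{\sss^l}\psi,\nabla_{\sss^l}\psi)\cdot\varphi(r,\cdot)\bigr)\,d\theta.
\]

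Given this identity, both directions drop out immediately. For ($\Leftarrow$), the weak $p$-harmonicity of $\psi$ applied to the test field $\eta=\varphi(r,\cdot)\in C^{\infty}(\sss^l,\rr^q)$ gives $I_\psi(r,\varphi)=0$ for every $r>0$; hence, for every $\varphi\in C^\infty_c(\rr^{l+1},\rr^q)$ the double integral vanishes, which is the weak $p$-harmonicity of $\bar\psi$. For ($\Rightarrow$), I would test $\bar\psi$ against separated products $\varphi(r,\theta)=\chi(r)\eta(\theta)$ with $\chi\in C^\infty_c((0,\infty))$ non-negative and non-trivial and $\eta\in C^\infty(\sss^l,\rr^q)$ arbitrary; the radial integral then factorizes as $\int_0^\infty r^{\,l-p}\chi(r)\,dr>0$, and dividing out forces the spherical integrand to vanish for every $\eta$, which is precisely the weak $p$-harmonicity of $\psi$ on $\sss^l$.

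There is no real obstacle here; the only subtlety is the integrability justifying Fubini's theorem, but this is built into the hypotheses. The very same polar identity applied to the $p$-energy yields
\[
\int_{B_R}|D\bar\psi|^p\,dx=\Bigl(\int_0^R r^{\,l-p}\,dr\Bigr)\cdot\int_{\sss^l}|\nabla_{\sss^l}\psi|^p\,d\theta,
\]
which is finite for every $R>0$ since $\bar\psi\in W^{1,p}_{loc}(\rr^{l+1},N)$; thus $r^{\,l-p}$ is locally integrable near $0$ and, combined with the compact support of $\varphi$ in $r$, renders all the integrals above absolutely convergent and the slicing argument rigorous.
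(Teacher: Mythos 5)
Your proposal follows the same route as the paper's proof: pass to polar coordinates, use homogeneity to factor the weak identity \eqref{weakpharm} into $\int_0^\infty r^{l-p}I_\psi(r,\varphi)\,dr$, read off one direction by slicing and the other by testing against separated products $\chi(r)\eta(\theta)$. Both directions of your argument are structurally correct and match the paper.

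The one step that does not hold up as written is the final integrability justification. From the identity
\[
\int_{B_R}|D\bar\psi|^p\,dx=\Bigl(\int_0^R r^{\,l-p}\,dr\Bigr)\cdot\int_{\mathbb{S}^l}|\nabla_{\mathbb{S}^l}\psi|^p\,d\theta
\]
and the finiteness of the left-hand side you infer that $r^{l-p}$ is locally integrable near $0$. That inference fails precisely when the second factor vanishes: if $p\geq l+1$ then $\int_0^R r^{l-p}\,dr=+\infty$, and the finiteness of the local energy forces $E_p(\psi)=0$, i.e.\ $\psi$ (hence $\bar\psi$) is constant a.e.; in this regime $r^{l-p}$ is \emph{not} locally integrable near the origin, and your absolute-convergence argument for test functions $\varphi$ whose support contains $0$ breaks down. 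The paper handles this by an explicit case split: for $p\geq l+1$ both maps are constant and therefore trivially $p$-harmonic, so the equivalence holds vacuously; the Fubini/slicing argument is only invoked for $p<l+1$, where $l-p>-1$ and $r^{l-p}$ genuinely is integrable near $0$. Your conclusion is not endangered, but you should state this dichotomy rather than derive the integrability of $r^{l-p}$ from the finiteness of a product whose other factor may be zero.
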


\begin{proof}
Let $(r,\theta) \in \mathbb{R}_{> 0} \times \sss^{l}$ be local polar coordinates on $\mathbb{R}^{l+1}$. Namely, we suppose to have chosen local angular coordinates  $\{\theta^1,...,\theta^l\}$ on $\sss^l$ so that $\{r,\theta^1,...,\theta^l\}$ is a local coordinates system for $\rr^{l+1}\setminus\{0\}$.

Having fixed an isometric embedding $i:N\to\rr^q$ with second fundamental form $\mathcal A$, we let $\check{\bar\psi}=i\circ\bar\psi:\rr^{l+1}\to\rr^q$ and $\check\psi=i\circ\psi:\sss^{l}\to\rr^q$. By assumption $\bar\psi(r,\theta)  = \psi (\theta)$, where $\psi\in W^{1,p}(\sss^l,N)$.

Let $\bar\varphi \in C^\infty_c(\rr^{l+1},\rr^q).$ Then, we have
\begin{align*}
& (D\check{\bar\psi} \cdot D\bar\varphi)(r,\theta)=
r^{-2}(D\check\psi \cdot D(\bar\varphi(r,\cdot)))(\theta),
\end{align*}
and
\[
|D\check{\bar\psi}|^2(r,\theta) = r^{-2} |D\check\psi|^2(\theta).
\]
Whence, it follows that, for every $R>0$,
\begin{align}\label{eq_pharm_energy}
&E_p(\left.  \bar{\psi}\right\vert _{\mathbb{B}_{R}\left(  0\right)  }) = E_p(\psi)  \int_{0}^R r^{l-p} dr
\end{align}
and
\begin{align}\label{eq_pharm}
&\int_{\rr^{l+1}}|D\check{\bar\psi}|^{p-2}\left\{D\check{\bar\psi} \cdot D\bar\varphi + \mathcal{A}(D\check{\bar\psi},D\check{\bar\psi}) \cdot \bar\varphi\right\} dx\\
&=\int_0^\infty r^{l-p}\int_{\sss^l}|D\check\psi|^{p-2}(\theta)\left\{D\check\psi \cdot D(\bar\varphi(r,\cdot)(\theta)\vphantom{\left\langle D\check{\bar\psi},D\bar\varphi\right\rangle_{HS(\rr^{l+1},\rr^q)}(r,\theta)}\right.\nonumber\\
&\left.\vphantom{\left\langle D\check{\bar\psi},D\bar\varphi\right\rangle_{HS(\rr^{l+1},\rr^q)}(r,\theta)}+ \mathcal{A}(D\check{\psi},D\check{\psi})\cdot \bar\varphi(r,\theta)\right\}d\sigma(\theta) dr\nonumber.
\end{align}
If $p\geq l+1$, from (\ref{eq_pharm_energy}) we must conclude that $E_p(\psi)$=0 and, therefore, that $\psi$ and $\bar\psi$ are constant. In particular, they are both trivially $p$-harmonic.\\
Suppose that $p<l+1$. Since, for each $r>0$, $\bar\varphi(r,\cdot)\in C^{\infty}(\sss^l,\rr^q)$, recalling the extrinsic definition of (weak) $p$-harmonicity given in \eqref{weakpharm}, from (\ref{eq_pharm}) we deduce that if $\psi:\sss^l\to N$ is weakly $p$-harmonic, then $\bar\psi:\rr^{l+1}\to N$ is weakly $p$-harmonic.
On the other hand, assume that $\bar\varphi$ has the form $\bar\varphi(r,\theta)=\varphi(\theta)\nu(r)$, where $\varphi\in C^\infty(\sss^l,\rr^q)$ and $\nu\in C_c^\infty([0,\infty))$ is such that $\nu^{2k+1}(0)=0$ for all $k\geq0$. Then $\bar\varphi\in C^\infty_c(\rr^{l+1},\rr^q)$ and \eqref{eq_pharm} becomes
\begin{align*}
&\int_{\rr^{l+1}}|D\check{\bar\psi}|^{p-2}\left\{D\check{\bar\psi} \cdot D\bar\varphi + \mathcal{A}(D\check{\bar\psi},D\check{\bar\psi})\cdot \bar\varphi\right\} dx\\
&=\left\{\int_0^\infty r^{l-p}\nu(r)dr\right\}\left\{\int_{\sss^l}|D\check\psi|^{p-2}\left[D\check\psi \cdot D\varphi+ \mathcal{A} (D\check\psi,D\check\psi)\cdot \varphi\right]d\sigma\right\}, \nonumber
\end{align*}
proving that $\psi:\sss^l\to N$ is weakly $p$-harmonic whenever $\bar\psi:\rr^{l+1}\to N$ is weakly $p$-harmonic.
\end{proof}

\begin{proposition}\label{prop_p-MTM}
Suppose that $N$ is compact and that  $\xi\in W_{loc}^{1,p}(\mathbb R^{l},N)$ is a $p$-MTM. If $l\leq [p]$  then $\xi$ is constant  and, if $l>[p]$ and $N$ does not support any non-constant $p$-MTM from $\rr^j$ into $N$, $j=1,...,l-1$, then $\xi$ has at most an isolated singularity at the origin. In particular, $\xi|_{\sss^{l-1}}\in C^{1,\alpha}(\sss^{l-1},N)$. Moreover, if the universal cover $\tn$ of $N$ supports a strictly convex function, then every $p$-MTM from $\rr^{l}$ to $N$ is constant, for every $l\geq 1$.
\end{proposition}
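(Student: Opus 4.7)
The proof decomposes naturally into three parts, which I would address in turn.

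For the first part ($l \leq [p]$) I would mimic the polar computation in the proof of Lemma \ref{lem_p-harm}. Writing $\xi(x) = \psi(x/|x|)$ with $\psi \in W^{1,p}(\sss^{l-1},N)$, as in the discussion preceding Lemma \ref{lem_p-harm}, the same change to polar coordinates yields
\[
E_p(\xi|_{\bbb_R(0)}) = E_p(\psi) \int_0^R r^{l-1-p}\, dr.
\]
Since $l \leq [p]$ forces $l \leq p$ and hence $l-1-p \leq -1$, the radial integral diverges at $0$. The finiteness of the left hand side, guaranteed by $\xi \in W^{1,p}_{loc}$, then imposes $E_p(\psi) = 0$, so that $\psi$ and hence $\xi$ are constant.

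For the second part ($l > [p]$ together with the hypothesis ruling out non-constant $p$-MTMs from lower-dimensional Euclidean spaces) I would invoke Federer's dimension reduction as implemented for $p$-energy minimizers by Hardt and Lin \cite{HaLi-CPAM}. If $x_0 \in \rr^l \setminus \{0\}$ were a singular point of $\xi$, the $\varepsilon$-regularity criterion would produce a non-constant tangent map $\eta$ at $x_0$, itself a $p$-MTM from $\rr^l$. The degree-$0$ homogeneity of $\xi$ makes it constant along every ray through $0$, a feature which survives the blow-up and translates into translation invariance of $\eta$ along $v_0 := x_0/|x_0|$. Splitting $\rr^l = \rr v_0 \oplus v_0^\perp$, the map $\eta$ then descends to a non-constant $p$-MTM from $\rr^{l-1}$, contradicting the hypothesis. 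Hence $\xi$ has at most an isolated singularity at the origin and, by the Hardt--Lin interior regularity theorem, $\xi \in C^{1,\alpha}(\rr^l \setminus \{0\},N)$; in particular $\xi|_{\sss^{l-1}} \in C^{1,\alpha}$.

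For the third claim I proceed by induction on $l \geq 1$. Since $p \geq 2$ implies $[p] \geq 2$, the base cases $l = 1,2$ satisfy $l \leq [p]$ and are covered by the first part. For the inductive step $l \geq 3$, the inductive hypothesis supplies exactly the no-MTM assumption of the second part, so $\xi|_{\sss^{l-1}}$ is a $C^{1,\alpha}$, weakly $p$-harmonic map into $N$ by the $\rr^l$-analogue of Lemma \ref{lem_p-harm}. Simple connectedness of $\sss^{l-1}$ then allows one to lift it to a weakly $p$-harmonic $\tilde\xi: \sss^{l-1} \to \tn$ (the covering projection $\tn \to N$ being a local isometry); composing with the strictly convex $F$ gives a continuous, weakly $p$-subharmonic function $F \circ \tilde\xi$ on the closed manifold $\sss^{l-1}$, and the strong maximum principle for the quasi-linear $p$-Laplacian forces $F \circ \tilde\xi \equiv c$ for some constant $c$.

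The concluding step, passing from constancy of $F \circ \tilde\xi$ to constancy of $\tilde\xi$, is the main technical obstacle. The idea is that the image of $\tilde\xi$ lies in the level set $\Sigma_c = F^{-1}(c)$ which, away from the at most one critical point of $F$, is a smooth hypersurface whose second fundamental form $\mathbf{II}^{\Sigma_c} = -\operatorname{Hess} F|_{T\Sigma_c}/|\nabla F|$ is strictly negative definite by strict convexity of $F$. Projecting the $p$-harmonic equation for $\tilde\xi$ onto the normal direction of $\Sigma_c$ yields the scalar identity
\[
|d\tilde\xi|^{p-2}\,\operatorname{tr}\bigl(\mathbf{II}^{\Sigma_c}(d\tilde\xi, d\tilde\xi)\bigr) = 0,
\]
whose strict sign then forces $d\tilde\xi \equiv 0$, so that $\tilde\xi$ and hence $\xi$ are constant, closing the induction. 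This is in the same spirit as the arguments in \cite{ScUh-JDG} for $p = 2$ and \cite{WeYa-JGA} for $p > 2$.
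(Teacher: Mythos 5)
Your Steps 1 and 2 are correct and in fact more self-contained than the paper's own proof, which simply cites Theorem 4.5 of \cite{HaLi-CPAM} both for the constancy when $l\leq[p]$ and for the discreteness of the singular set (then using homogeneity to push the singularity to the origin); your polar-coordinate energy computation and your unfolding of Federer's dimension reduction are exactly what lies behind those citations. The induction structure of Step 3 also matches the paper's, including the reduction to showing that a $C^{1,\alpha}$, weakly $p$-harmonic map $\sss^{l-1}\to N$ with $l\geq 3$ lifts to $\tn$ and must be constant.

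The genuine gap is in the mechanism of Step 3. You assert that $F\circ\tilde\xi$ is weakly $p$-subharmonic and invoke the strong maximum principle for the $p$-Laplacian. For $p>2$ this composition property is \emph{false} in general: the paper itself emphasizes in Subsection \ref{max}, citing \cite{Ve-Manuscripta}, that the composition of a $p$-harmonic map with a convex function need not be $p$-subharmonic. What does survive is only the linear, degenerate identity
\begin{equation*}
\dive\left(|d\tilde\xi|^{p-2}\nabla(F\circ\tilde\xi)\right)=\tr\Hess F\left(|d\tilde\xi|^{p-2}d\tilde\xi,\,d\tilde\xi\right)\geq 0,
\end{equation*}
whose principal coefficient $|d\tilde\xi|^{p-2}$ vanishes where $d\tilde\xi=0$, so no strong maximum principle is available for it either. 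The correct (and simpler) way to finish is to integrate this identity over the closed manifold $\sss^{l-1}$ --- equivalently, to test the stationarity of the $p$-energy with the vector field $\nabla F\circ\tilde\xi$ along $\tilde\xi$, which requires no compact support since $\partial\,\sss^{l-1}=\emptyset$. The divergence theorem then yields $\int_{\sss^{l-1}}|d\tilde\xi|^{p-2}\tr\Hess F(d\tilde\xi,d\tilde\xi)=0$, and strict convexity of $F$ forces $d\tilde\xi\equiv 0$ directly; the entire discussion of the level set $\Sigma_c$ and its second fundamental form (which in any case breaks down when $c=\min F$ and $\Sigma_c$ degenerates to the critical point of $F$) is then unnecessary. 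The paper bypasses all of this by quoting Theorem 1.4 of \cite{WeYa-JGA}, which is precisely this constancy statement.
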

\begin{proof}
Indeed, the case $l\leq [p]$ follows directly from Theorem 4.5 of \cite{HaLi-CPAM}.
For the general case we proceed by induction. Let $l>[p]$ and suppose that every $p$-MTM from $\rr^j$ to $N$ is trivial for every $j=1,\dots,l-1$. Let  $\xi:\rr^l \to N$ be a $p$-MTM. Then, by Theorem 4.5 in \cite{HaLi-CPAM} the set of singular points of $\xi$ is discrete (possibly empty) and by homogeneity it reduces to the sole origin. In particular, by Corollary 2.6 and Theorem 3.1 in \cite{HaLi-CPAM}, we deduce that $\xi|_{\sss^{l-1}}:\sss^{l-1}\to N$ is $C^{1,\alpha}$ and it is $p$-harmonic thanks to Lemma \ref{lem_p-harm}. To conclude, in case $\tn$ supports a convex function, we can apply Theorem 1.4 in \cite{WeYa-JGA} and obtain that $\xi$ is constant.
\end{proof}

\subsection{Extending relative $d$-homotopies} \label{subsection_dHomotopies}

Fix a triangulation of $\partial M$ and extend it to a triangulation of $M$. Thus $M$ is a CW-complex and $\partial M$ is a subcomplex of $M$, see \cite{Whd-Annals, Mu}. Let $M^d$ denote that $d$-skeleton of $M$.

Two continuous maps $v,f:M \to N$ are said to be $d$-homotopic relative to $M^d \cap \partial M$ (or, equivalently, they have the same $d$-homotopy type) if there exists a continuous map $H^d:[0,1]\times M^{d} \to N$
such that $H^d(0,x)=v(x)$, $H^d(1,x)=f(x)$ for all $x\in M^d$ and $H^d(\cdot,x)=f(x)=v(x)$ for all $x\in M^d\cap \partial M$.
Clearly, when $d\geq \dim M$ the relative $d$-homotopy type of maps is nothing but the usual homotopy type relative to $\partial M$.

By the homotopy extension property of the couple $(M,M^d)$ we already know that if $v$ and $f$ have the same $d$-homotopy type, then $H^d$ extends to a full homotopy $H:M \to N$ such that $H(0,x)=v(x)$. In this subsection, under the assumption that the target manifold $N$ is aspherical, we construct a special extension $H$ of $H^{d}$ satisfying the further requirements $H(1,x)=f(x)$ for every $x\in M$ and $H(\cdot,x)=f(x)=v(x)$ for every $x\in\partial M$.

Recall that $N$ is said to be aspherical if each homotopy group $\pi_k(N)$ of $N$ is trivial for $k\geq 2$.

\begin{proposition} \label{th_dHomot}
Let $v,f\in C^0(M,N)$ and assume that $N$ is aspherical. If $v$ and $f$ have the same relative $d$-homotopy type, $d \geq 1$, then they have the same relative homotopy type.
\end{proposition}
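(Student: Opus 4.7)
The plan is to reduce the statement to a purely cellular question and then to extend the given $d$-homotopy skeleton-by-skeleton, at each stage killing the obstruction via the vanishing of $\pi_k(N)$ for $k\ge 2$.

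First I would set $K:=M^d\cup\partial M$, which is a subcomplex of $M$ for the fixed triangulation of the pair $(M,\partial M)$. The very possibility of a relative homotopy between $v$ and $f$ forces $v|_{\partial M}=f|_{\partial M}$, so the given relative $d$-homotopy $H^d:[0,1]\times M^d\to N$ can be continuously extended to $\tilde H:[0,1]\times K\to N$ by declaring $\tilde H(t,x)=v(x)=f(x)$ on $\partial M$; this is compatible on the overlap $M^d\cap\partial M$ by definition of a relative $d$-homotopy. Since $(M,K)$ is a CW-pair, hence a cofibration, the Homotopy Extension Property applied to the initial map $v$ provides a homotopy $H:[0,1]\times M\to N$ with $H(0,\cdot)=v$ that extends $\tilde H$; its endpoint $g:=H(1,\cdot)$ then agrees with $f$ on the whole of $K$, and in particular $v$ is homotopic to $g$ relative to $\partial M$.

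Next I would construct a homotopy $g\simeq f$ rel $K$ by induction on $k\ge d$, extending from $[0,1]\times(M^k\cup\partial M)$ to $[0,1]\times(M^{k+1}\cup\partial M)$. Given a $(k+1)$-cell $e\not\subset\partial M$ with characteristic map $\phi:D^{k+1}\to M$, the maps $g\circ\phi$ on $\{0\}\times D^{k+1}$ and $f\circ\phi$ on $\{1\}\times D^{k+1}$, together with the previously defined homotopy on $[0,1]\times S^k$, assemble into a continuous map on the boundary of the cylinder $[0,1]\times D^{k+1}$, a space homeomorphic to $S^{k+1}$. Since $N$ is aspherical and $k+1\ge d+1\ge 2$, the group $\pi_{k+1}(N)$ vanishes; this sphere-map is therefore null-homotopic and extends continuously to the solid cylinder $[0,1]\times D^{k+1}$. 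Performing this extension on every $(k+1)$-cell outside $\partial M$ yields the extension to the next skeleton, and iterating up to $\dim M$ produces the desired homotopy. Concatenating with $H$ finally gives a homotopy $v\simeq f$ rel $\partial M$.

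The step that requires genuine care is precisely the cellular extension: one must check that the obstruction really is the class in $\pi_{k+1}(N)$ of a bona fide $(k+1)$-sphere map, and that no $\pi_1(N)$-twisting enters through a change of base-points. This is exactly where the hypothesis $d\ge 1$ is used: it ensures that every extension step takes place in dimension $\ge 2$, so the higher homotopy groups are base-point-free and the asphericity of $N$ alone suffices to close the argument. For $k+1=1$ one would instead be forced to confront the $\pi_1(N)$-action on cellular data, which is the only place where a true algebraic topological obstruction could arise.
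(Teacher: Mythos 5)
Your proof is correct and follows essentially the same obstruction-theoretic route as the paper: extend the homotopy cell by cell over $[0,1]\times M$, the obstruction over each $(k+1)$-cell being a map of $\mathbb{S}^{k+1}$ into $N$ which is null-homotopic because $N$ is aspherical and $k+1\geq d+1\geq 2$. The only cosmetic difference is that you first normalize $v$ via the homotopy extension property so that it agrees with $f$ on the subcomplex $M^d\cup\partial M$ and then extend rel that subcomplex, whereas the paper performs the extension directly on the product CW structure of $[0,1]\times M$ with both endpoint maps prescribed.
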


\begin{proof}
By assumption, we know that there exists a continuous map
$H^d:[0,1]\times M^{d} \to N$
such that $H^d(0,x)=v(x)$, $H^d(1,x)=f(x)$ for all $x\in M^d$ and $H^d(\cdot,x)=f(x)=v(x)$ for all $x\in M^d\cap \partial M$.

Using the aspherical structure of $N$ in a classical manner (see e.g \cite{Hatcher}), we are going to show that $H^d$ extends to a homotopy $H^{d+1}$ between $v$ and $f$ on $M^{d+1}$ relative to $M^{d+1}\cap\partial M$, i.e., $H^{d+1}:[0,1]\times M^{d+1}\to N$ is a continuous function such that $H^{d+1}(0,\cdot)=v(\cdot)$, $H^{d+1}(1,\cdot)=f(\cdot)$ and $H^{d+1}(\cdot,x)=f(x)=v(x)$ for all $x\in M^{d+1}\cap \partial M$. Clearly we can assume $1 \leq d <m$ for otherwise there is nothing to prove.

The $(d+1)$-skeleton $M^{d+1}$ is obtained as $M^{d+1}=M^{d}\cup \left(\cup_\alpha e_\alpha^{d+1}\right)$, where $e_\alpha^{d+1}$ are open $(d+1)$-cells with attaching maps $\psi_\alpha:\mathbb{S}^d\to M^d$ and corresponding characteristic maps $\bar\psi_\alpha: \mathbb{D}^{d+1} \to M^{d+1}$. Note that $[0,1]\times M^d \subseteq ([0,1]\times M)^{d+1}$ and $H^d$ extends to a continuous function $H^{d+1}:([0,1] \times M)^{d+1} \to N$ by setting $H^{d+1}(0,x)=v(x)$ and $H^{d+1}(1,x)=f(x)$ on $\{0,1\}\times \bar e_\alpha^{d+1}$.

The CW complex $[0,1]\times M^{d+1} \subseteq ([0,1]\times M)^{d+2}$ is obtained by attaching to $([0,1]\times M)^{d+1}$ the open cells $E_\alpha^{d+2} = (0,1)\times e_{\alpha}^{d+1}$ via $\Psi_\alpha:\sss^{d+1} \to ([0,1]\times M)^{d+1}$ where  $\sss^{d+1} \approx ([0,1]\times\sss^{d}) \cup (\{0,1\}\times \mathbb{D}^{d+1})$, $\Psi_\alpha = \mathrm{id}\times \psi_\alpha$ on $[0,1]\times\sss^{d}$ and $\Psi_\alpha = \{0,1\}\times \bar\psi_\alpha$ on $\{0,1\}\times \mathbb{D}^{d+1}$.
Let us show how to define $H^{d+1}$ on $([0,1]\times M)^{d+1} \cup E_\alpha^{d+2}$. Since $\partial M$ is a subcomplex of $M$, we have that either $e_\alpha^{d+1}\subset \partial M$ or $e_\alpha^{d+1}\subset \operatorname{int}(M)$. In the first case we just define, for all $(t,x) \in E_\alpha^{d+2}$, $H^{d+1}(t,x)=f(x)=v(x)$. In the second case, note that $H^{d+1}$ is already defined on $\Psi_\alpha(\sss^{d+1})$. Since $d+1\geq 2$ and $N$ is aspherical, the composition $H^{d+1}\circ \Psi_\alpha: \sss^{d+1} \to N$ is null-homotopic and, therefore, it extends to a continuous map $H^{d+1}\circ \Psi_\alpha: E_\alpha^{d+2} \to N$. This implies that $H^{d+1}$ itself extends to a continuous map from $([0,1]\times M)^{d+1} \cup E_\alpha^{d+2}$ into $N$.

Now, repeating the same procedure inductively for all $E_\alpha^{d+2}$ and for all the $k$-skeletons $M^k$, $d<k\leq m$, we complete the construction of the desired relative homotopy $H$.

\end{proof}

\subsection{The $d$-homotopy type of $W^{1,p}$ maps}
Let $d$ be the greatest integer less than or equal to $p-1$ and let $M^d$ be a $d$-dimensional skeleton of $M$. Clearly, here we mean $M^d\equiv M$ for $p-1>m$. Recall from the previous subsection that the $d$ homotopy type of a continuous map from $M$ to $N$ is the homotopy type of its restriction to $M^d$. According to the work of White \cite{Wh-Acta}, each $u\in W^{1,p}(M,N)$ with boundary trace $f$ has a $d$-homotopy type $u_\sharp[M^d(\operatorname{rel} \partial M)]$. This $d$-homotopy type is a homotopy class (relative to $\partial M$) of continuous mappings from $M^d$ into $N$ such that:
\begin{enumerate}
	\item If $\{u_i\}\subset W^{1,p}(M,N)$ have boundary trace $h$, $\|\check u_i-\check u\|_p\to0$, and $\|du_i\|_p$ is uniformly bounded, then
	\[
	(u_i)_\sharp[M^d(\operatorname{rel} \partial M)]=u_\sharp[M^d(\operatorname{rel} \partial M)]
	\]
	for sufficiently large $i$.
	\item If $u\in W^{1,p}(M,N)$ has boundary trace $f$ and is continuous at each $x\in M^d$, then
	\[
	u_\sharp[M^d(\operatorname{rel} \partial M)]=[(u|_{M^d})(\operatorname{rel} \partial M)].
	\]
	\item The set
	\[
	\{u_\sharp[M^d(\operatorname{rel} \partial M)]:u\in W^{1,p}(M,N) \textrm{ has boundary trace }f\}
	\]
	is equal to
	\begin{align*}
	&\{[(\varphi|_{M^d})(\operatorname{rel} \partial M)]:\varphi\in C^0(M^{d+1},N),\\
	&\ \varphi(x)=f(x)\textrm{ for }x\in M^{d}\cap \partial M\}.
	\end{align*}
\end{enumerate}

The purpose of this subsection is to point out the following property of the $d$-homotopy type whose application will be basic in the proof of Theorem \ref{existence_1} to apply the regularity theory of \cite{HaLi-CPAM}.

\begin{proposition}\label{pr_perturbing}
Let $M$ be a compact $m$-dimensional manifold with (possibly empty) boundary $\partial M$ and let $N$ be a compact manifold. Let $f\in Lip(\partial M,N)$ and let $v\in W^{1,p}(M,N)$ be a map with boundary trace $f$. For every $x\in M$ there exists an open set $x\in\Omega_x\subset M$ (independent of $v$) with smooth boundary $\partial\Omega_x$, which satisfies the following property: for any other map $w\in W^{1,p}(M,N)$ such that $w|_{\partial M}=v|_{\partial M}$ in the trace sense and $w\equiv v$ on $M\setminus\Omega_x$ it holds
\begin{equation}\label{perturbing_homot}
w_\sharp[M^d(\operatorname{rel} \partial M)]=v_\sharp[M^d(\operatorname{rel} \partial M)].
\end{equation}
\end{proposition}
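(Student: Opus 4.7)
The plan is to arrange $\Omega_{x}$ to avoid the $d$-skeleton $M^{d}$, so that any $W^{1,p}$-perturbation of $v$ supported in $\Omega_{x}$ leaves untouched the data that determine the $d$-homotopy type. Concretely, I would fix a triangulation of $(M,\partial M)$ fine enough that $x$ lies in the interior of some closed $m$-simplex $\sigma_{x}$ (if $x\in\partial M$ one adapts by choosing a simplex meeting $\partial M$ in a single face through $x$), and take $\Omega_{x}$ to be a small open geodesic ball around $x$ with smooth boundary such that $\overline{\Omega_{x}}\Subset\operatorname{int}(\sigma_{x})$. This choice depends only on $M$, $x$ and the triangulation, hence is independent of $v$. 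In the main range $d\leq m-1$, which covers $p\leq m$, one has $\overline{\Omega_{x}}\cap M^{d}=\emptyset$.

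The heart of the argument is to promote the identity $v\equiv w$ on $M\setminus\Omega_{x}$ to the level of continuous approximations. Using the Nash embedding $i\colon N\hookrightarrow \mathbb{R}^{q}$ and writing $\check v=i\circ v$, $\check w=i\circ w$, I would simultaneously mollify $\check v$ and $\check w$ by a common kernel $\rho_{\varepsilon_{j}}$ and then apply the nearest-point retraction onto a uniform tubular neighborhood of $N$, relying on the translation-averaging scheme of White to ensure that this projection is pointwise defined and that boundary traces are preserved. This yields sequences $v_{j},w_{j}\in W^{1,p}(M,N)\cap C^{0}(M,N)$ with boundary trace $f$, converging to $v$ and $w$ respectively in $W^{1,p}$, and satisfying $v_{j}\equiv w_{j}$ on $M\setminus\Omega_{x}^{\varepsilon_{j}}$, where $\Omega_{x}^{\varepsilon_{j}}$ denotes the $\varepsilon_{j}$-neighborhood of $\Omega_{x}$. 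For $j$ large one has $\Omega_{x}^{\varepsilon_{j}}\subset\operatorname{int}(\sigma_{x})$, and hence $v_{j}|_{M^{d}}\equiv w_{j}|_{M^{d}}$.

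To conclude, properties (1) and (2) of White's $d$-homotopy type close the loop. Since each $v_{j}$ is continuous at every point of $M^{d}$, property (2) gives $(v_{j})_{\sharp}[M^{d}(\operatorname{rel}\partial M)]=[v_{j}|_{M^{d}}(\operatorname{rel}\partial M)]$ and similarly for $w_{j}$, so these classes coincide by the previous step. Finally property (1), applied to $v_{j}\to v$ and $w_{j}\to w$, yields for $j$ sufficiently large
\[
v_{\sharp}[M^{d}(\operatorname{rel}\partial M)]=(v_{j})_{\sharp}[M^{d}(\operatorname{rel}\partial M)]=(w_{j})_{\sharp}[M^{d}(\operatorname{rel}\partial M)]=w_{\sharp}[M^{d}(\operatorname{rel}\partial M)],
\]
which is \eqref{perturbing_homot}. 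I expect the main obstacle to lie in the simultaneous approximation step, where the pointwise nearest-point retraction is defined only where the mollifier lies inside the tubular neighborhood of $N$; a careful use of White's averaging argument is needed to push the ``bad'' set strictly inside $\sigma_{x}$ and thus keep it disjoint from $M^{d}$. In the remaining case $d\geq m$ (so $M^{d}\equiv M$ and $p>m$), the Sobolev embedding $W^{1,p}\hookrightarrow C^{0}$ renders $v,w$ continuous and the $d$-homotopy type coincides with the ordinary relative homotopy type; under the asphericity of $N$ tacit in the applications of Theorem \ref{existence_1} (where $\tn$ carries a strictly convex exhaustion, hence is contractible), two continuous maps coinciding outside a small ball are automatically homotopic rel $\partial M$.
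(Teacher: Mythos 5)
Your opening move --- retriangulate so that $x$ avoids the $d$-skeleton and take $\Omega_x$ at positive distance from $M^d$ --- is essentially the paper's own first two cases (the paper builds a triangulation $T_x$ of a small geodesic ball whose $d$-skeleton misses $x$, extends it to $M$, and then invokes White's Proposition 3.5 to the effect that the $d$-homotopy type does not depend on the triangulation; you should state that invariance explicitly, since the class in \eqref{perturbing_homot} is taken with respect to a fixed $M^d$). The genuine gap is in what you call the heart of the argument. You propose to mollify $\check v$ and $\check w$ simultaneously and retract onto $N$, producing $v_j,w_j\in C^0(M,N)\cap W^{1,p}(M,N)$ with boundary trace $f$ and $v_j\to v$, $w_j\to w$ in $W^{1,p}$. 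Such global continuous approximants do not exist in general: for $p<m$, continuous (or smooth) maps are dense in $W^{1,p}(M,N)$ only under topological hypotheses on $N$ (essentially the vanishing of $\pi_{[p]}(N)$), and White's averaging scheme does not produce them --- it only shows that the mollified map, restricted to \emph{generic} small translates of the $d$-dimensional skeleton, lands near $N$ and is continuous there (because $d\le p-1<p$), which is what makes the homotopy class of the restriction well defined. If your approximation step worked it would prove density of continuous maps in $W^{1,p}(M,N)$ for arbitrary compact $N$, which is false. The correct way to finish, and what the paper does, is to use White's definition directly: once $\Omega_x$ is at positive distance from $M^d$, one can take the parameter $\delta$ in White's Proposition 3.2 small enough that the generic translates of $M^d$ determining the $d$-homotopy type stay disjoint from $\Omega_x$, where $v$ and $w$ coincide; no global continuous approximation is needed.

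Your boundary case also does not go through as sketched. If $x\in\partial M$, every neighborhood of $x$ in $M$ meets $\partial M$, which is an $(m-1)$-dimensional subcomplex contained in $M^{m-1}$; so for $d=m-1$ (i.e.\ $[p]=m$, allowed by the statement and relevant in the application) no retriangulation can make $\Omega_x$ avoid $M^d$, and the translation-averaging is in any case delicate at the boundary. The paper circumvents both problems by composing with the Lipschitz retraction $u$ of Lemma \ref{th_retraction}: it picks an interior point $y$ with $u(y)=x$, applies the interior case to $v\circ u$ and $w\circ u$ (which agree outside $\Omega'_y$ because $u$ maps the collar $V$ into $\partial M$, where the traces of $v$ and $w$ coincide), and uses that precomposition with $u$ preserves the relative $d$-homotopy type. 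You would need some substitute for this reduction before the boundary case can be considered settled.
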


\begin{proof}
We consider three cases.\\
First suppose that $x\in\operatorname{int}(M)$ and that $x\not\in M^d$. Since $M^d$ is closed in $M$ we can choose the open set $\Omega_x$ such that $x\in\Omega_x\subset\subset \operatorname {int} (M)\setminus M^d$ and $\partial \Omega_x$ is smooth. By the construction of the $d$-homotopy type given in Section 3 of \cite{Wh-Acta}, and up to choosing $\delta>0$ small enough in (the version for manifolds with boundary of) Proposition 3.2 therein, it is clear that in this case perturbing a $W^{1,p}$ map in $\Omega_x$ does not affect the $d$-homotopy type of the map.\\
Now, let $x\in M^d\cap\operatorname {int}(M)$. Consider, for $\epsilon>0$ small enough, a normal closed geodesic ball $\bar B_\epsilon(x)$ centered at $x$. Choose a triangulation $T_x$ of $\bar B_\epsilon(x)$ such that $x$ is not contained in the $d$-skeleton $T^d_x$ of $T_x$ (to this purpose, one can for instance take such a construction on the Euclidean unit closed ball, and make use of the diffeomorphism with $\bar B_\epsilon(x)$ given by the normal coordinates). Note that $T_x$ induces a triangulation of $\partial \bar B_\epsilon(x)$. Choosing a triangulation of $\partial M$ gives, together with $T_x|_{\partial \bar B_\epsilon(x)}$, a triangulation of $\partial M\cup\partial \bar B_\epsilon(x)=\partial (M\setminus \bar B_\epsilon(x))$. A classical result ensures us that this triangulation of the boundary can be extended to a triangulation of all of $M\setminus \bar B_\epsilon(x)$ \cite{Whd-Annals,Mu}. This latter, together with $T_x$, forms a new triangulation of $M$ whose $d$-skeleton $(M^d)'$ does not contain $x$. According to the previous case there exists an open set with smooth boundary $\Omega_x$ such that, given maps $v$ and $w$ as in the statement, we have
\[
w_\sharp[(M^d)'(\operatorname{rel} \partial M)]=v_\sharp[(M^d)'(\operatorname{rel} \partial M)].
\]
To conclude this case, we recall that, thanks to Proposition 3.5 of \cite{Wh-Acta}, the $d$-homotopy type of a $W^{1,p}$ map does not depend on the choice of the triangulation.\\
Finally, suppose that $x\in\partial M$. Using the notation of Lemma \ref{th_retraction}, let $y\in\mathcal \partial V$ be a point satisfying $u(y)=x$. Since $y\in\operatorname{int} (M)$, according to previous paragraphs there exists an open set $\Omega'_y$, $y\in\Omega'_y\subset\subset\operatorname{int} (M)$, such that perturbing a $W^{1,p}$ map inside $\Omega'_y$ does not change the $d$-homotopy type of the map.
Let $\Omega_x=u(\Omega'_y\cap (M\setminus V))$. Since $u|_{M\setminus V}$ is a differomorphism onto $M$, then $\Omega_x$ is an open set of $M$ containing $x$. Suppose that $v,w$ are two $W^{1,p}(M,N)$ maps as in the statement. Then, also $v\circ u$ and $w\circ u$ are maps in $W^{1,p}(M,N)$. Furthermore, by construction of $u$, it holds $v\circ u|_{V}=w\circ u|_{V}=f\circ u|_{V}$, where $f$ is the trace value of $v$ and $w$ at the boundary.
This in turn implies $v\circ u|_{\partial M}=w\circ u|_{\partial M}$ in the trace sense, and $v\circ u(z)= w\circ u(z)$ for each $z\in M\setminus \Omega_y'$, since either $u(z)\in\partial M$ or $u(z)\in M\setminus \Omega_x$. Hence,
\[
(w\circ u)_\sharp[{M^d}(\operatorname{rel} \partial M)]=(v\circ u)_\sharp[{M^d}(\operatorname{rel} \partial M)].
\]
On the other hand, by the construction of the relative $d$-homotopy type of $W^{1,p}$ maps given in \cite{Wh-Acta} it is clear that
\begin{align*}
&(v\circ u)_\sharp[{M^d}(\operatorname{rel} \partial M)]=v_\sharp[{M^d}(\operatorname{rel} \partial M)],\\
&(w\circ u)_\sharp[{M^d}(\operatorname{rel} \partial M)]=w_\sharp[{M^d}(\operatorname{rel} \partial M)].
\end{align*}
This latter, in turn, implies \eqref{perturbing_homot} as aimed. To conclude, observe that since $\partial M$ is smooth, up to possibly restrict the set $\Omega_x$, we can require that $\Omega_x$ has smooth boundary.
\end{proof}

\subsection{Proof of Theorem \ref{existence_1}} \label{subsection_proofThB}
For the sake of clarity, we will divide the proof in four steps.\\

\textbf{Step 1. Existence of a minimizer in the $d$-homotopy class of $f$.}
Define $\mathcal{H}_f^d$ as the space of maps $u\in W^{1,p}(M,N)$ such that $u|_{\partial M}=f|_{\partial M}$ in the trace sense and $f$ and $u$ have the same relative $d$-homotopy type, i.e.
\begin{align*}
\mathcal H^d_f := &\{u\in W^{1,p}(M,N) : \check u- \check f \in W^{1,p}_0(M,\rr^q)\textrm{ and}\\
&u_\sharp[M^d(\operatorname{rel} \partial M)]=f_\sharp[M^d(\operatorname{rel} \partial M)]\}.
\end{align*}
According to Proposition \ref{prop_retraction}, there is no loss of generality if we assume that $f \in Lip(M,N)$. In particular  $f\in\mathcal{H}^d_f $ and, therefore,
\[
\mathcal I^d_f:=\inf_{u\in\mathcal H^d_f}E_p(u)<+\infty.
\]
Let $\{v_j\}_{j=1}^{\infty}\subset\mathcal H^d_f$ be a sequence minimizing the $p$-energy in $\mathcal H^d_f$, i.e. $E_p(v_j)\to \mathcal I^d_f$ as $j\to\infty$. For the ease of notation, throughout all the proof we will keep the same set of indexes each time we will extract a subsequence from a given sequence.

Since $N$ is compact, then $\{\check v_j\}_{j=1}^{\infty}$ is bounded in $W^{1,p}(M,\mathbb R^q)$ and, up to choosing a subsequence, $\check v_j$ converges to some $\check v\in W^{1,p}(M,\rr^q)$ weakly in $W^{1,p}$. Since  $M$ is compact, $\{\check v_j\}_{j=1}^{\infty}$ is bounded in $W^{1,p'}(M,\rr^q)$ for every $p'\leq p$ which satisfies also $p'<m$. By the Kondrachov theorem, \cite{A} p.55, $\check v_j$ converges strongly in $L^s(M,\rr^q)$ for any $1<s<(mp')/(m-p')$, notably for $s=p$, and hence pointwise almost everywhere. Since $N$ is properly embedded, this implies $\check v(x)\in N$ for a.e. $x\in M$, so that we can define $v\in W^{1,p}(M,N)$ by $v=\check v$. Since $\check v_j- \check f \in W^{1,p}_0(M,\rr^q)$ for all $j$, the weak limit $\check v- \check f \in W^{1,p}_0(M,\rr^q)$.\\
By the lower semicontinuity of $E_p$ we have
\begin{equation}\label{lsc}
E_p(v)\leq \liminf_{j\to\infty}E_p(v_j)= \mathcal I^d_f.
\end{equation}
Since $\{\check v_j\}_{j=1}^{\infty}$ is bounded in $W^{1,p}(M,\mathbb R^q)$ and $\|\check v_j-\check v\|_p\to 0$ as $j\to\infty$, by the property (1) of the $d$-homotopy type of maps we deduce that
\[
v_\sharp[M^d(\operatorname{rel} \partial M)]=(v_j)_\sharp[M^d(\operatorname{rel} \partial M)]=f_\sharp[M^d(\operatorname{rel} \partial M)]
\]
for $j$ large enough, which implies that $v\in \mathcal H^d_f$.
It then follows from \eqref{lsc} that
\begin{align*}
\mathcal I^d_{f} \leq  E_p(v) \leq \mathcal I^d_f,
\end{align*}
so that $E_p(v)=\mathcal I^d_f$, i.e. $v$ minimizes the energy in $\mathcal H_{f}^d$.\\

\textbf{Step 2. Regularity of the minimizer.} We show that the regularity theory of \cite{HaLi-CPAM} applies to $v$, as already remarked on page 3 of \cite{Wh-Acta}. Clearly, the only interesting case is  $d:=[p]-1\leq m-1$.

First of all, we note that, for every $x\in M$, there exists an open set with smooth boundary $\Omega_x\ni x$ such that $v|_{\Omega_x}$ is a minimizer for the $p$-energy among all the maps $w\in W^{1,p}(\Omega_x,N)$ which have the same trace boundary of $v$ on $\partial\Omega_x$, that is $v|_{\partial\Omega_x}=w|_{\partial\Omega_x}$ in the trace sense. To see this, let $\Omega_x$ be the open set given by Proposition \ref{pr_perturbing}.
We can extend $w$ to $\bar w\in W^{1,p}(M,N)$ by setting $\bar w=v$ on $M\setminus \Omega_x$. An application of Proposition \ref{pr_perturbing} gives that $\bar w\in \mathcal H^d_f$. Then, $E_p(v)\leq E_p(\bar w)$. To conclude, we note that
\[
E_p(v|_{B_\epsilon})+E_p(v|_{M\setminus B_\epsilon})= E_p(v)\leq E_p(\bar w) = E_p(w) + E_p(v|_{M\setminus B_\epsilon}).
\]
This minimizing property enables us to apply the partial interior regularity and deduce that the singular set $\mathcal{S}(v)$ of $v$ is empty if $p>m$ and
it is a relatively closed subset of zero $(m-p)$-Hausdorff dimension if $p\leq m$. Moreover, $v$ is $C^{1,\alpha}$ on $\operatorname{int} (M) \setminus \mathcal{S}(v)$; see Corollary 2.6 and Theorem 3.1 in \cite{HaLi-CPAM}.

The full interior regularity is now obtained from Theorem 4.5 of  \cite{HaLi-CPAM} because, according to Proposition \ref{prop_p-MTM} above, every $p$-minimizing tangent map $\xi:\mathbb{R}^{l+1} \to N$ is constant, for every $l\geq 1$.

Finally, we observe that the boundary regularity theory developed in Section 5 of \cite{HaLi-CPAM} works for a Lipschitz boundary datum $f$. Therefore we can conclude that the minimizer $v$ is $C^{0,\alpha}$ on $M$.\\

\textbf{Step 3. On the relative homotopy class of the minimizer.} It remains to prove that the minimizer $v$ is homotopic to the datum $f$ relative to $\partial M$. To this end, recall that $M$ is realized as a polyhedral complex, hence a CW complex, in such a way that $\partial M$ is a subcomplex. By construction, we know that $v$ has the same $d(\geq1)$-homotopy type of $f$ relative to $M^d \cap \partial M$. Note also that $N$ is aspherical. Indeed, since its universal covering $\tn$ supports a strictly convex exhaustion function, by standard Morse theory $\tn$ is diffeomorphic to $\rr^n$. The desired conclusion now follows from a direct application of Proposition \ref{th_dHomot}.\\

\textbf{Step 4. Non-positively curved targets.} Suppose now that the compact manifold $N$ has non-positive sectional curvature so that, in particular, its universal covering $\tn$ is a Cartan-Hadamard manifold. By the Hessian comparison theorem, the square of the distance function on  $\tn$ is a strictly convex exhaustion function. Therefore, by the preceding steps, the homotopy $p$-Dirichlet problem has a solution $v \in C^{1,\alpha}(\operatorname{int} (M)) \cap C^0(M)$. Applying Theorem 8.5 (1) of \cite{We-Indiana} we conclude that such a solution is unique.

This completes the proof of the Theorem.

\section{A general uniqueness result}
In Theorem \ref{existence_1}, the uniqueness property enjoyed by solutions of the $p$-Dirichlet problem is obtained from a result by W. Wei. In this Section we extend Wei's result to solutions of the homotopic $p$-Dirichlet problem in case the target manifold is non-compact. The construction via the quotient manifold $\tnn$ proposed in the proof below comes back to Schoen and Yau \cite{SY-Topo}, which studied the moduli space of harmonic maps when $M$ is a complete non-compact manifold with finite volume. Subsequently, in \cite{PRS-MathZ} it was observed that it is enough for $M$ to be parabolic, while a generalization of Schoen and Yau's uniqueness results to $p$-harmonic maps has been obtained in \cite{V}. In particular, in \cite{V} there were introduced the convexity result stated below as Lemma \ref{lem_hess_p} and the ``mixed'' vector field $X$ used here, which in turn inspires to \cite{PRS-MathZ} and \cite{HPV-Pota}.

\begin{proof}[Proof (of Theorem \ref{th_uniqueness})]
Suppose $u$ and $v$ are two $C^{1}(M,N)$ solutions to Problem \ref{problem}. Let $P_M:\tm\to M$ and $P_N:\tn\to N$ be the universal Riemannian covers of $M$ and $N$, respectively. Note that $\tm$ is a simply connected manifold with non-empty boundary $\partial\tm$ (which is in general neither compact nor simply connected) such that $P_M(\partial\tm)\equiv \partial M$.\\
The fundamental groups $\pi_1(M,\ast)$ and $\pi_1(N,\ast)$ act as groups of isometries on $\tm$ and $\tn$ respectively, so that $M = \tm/\pi_1(M, \ast)$ and
$N = \tn/\pi_1(N, \ast)$. Let $\dist_{\tn}: \tn \times\tn\to\mathbb R$ be the distance function on $\tn$. Since ${}^{\tn}\sect\leq0$, we know that $\dist_{\tn}$ is smooth on $(\tn\times\tn)\setminus\tilde D$, where $\tilde D$ is the diagonal set $\{(\tilde x,\tilde x):\tilde x\in\tn\}$, and $\dist_{\tn}^2$ is smooth on $\tn\times\tn$. Now $\pi_1(N,\ast)$ acts on $\tn\times\tn$ as a group of isometries by
\[
\beta(\tilde x,\tilde y) = (\beta(\tilde x),\beta(\tilde y))\qquad\textrm{for }\beta\in\pi_1(N,\ast).
\]
Thus $\dist_{\tn}^2$ induces a smooth function
\[
\tir^2 : \tnn\to\mathbb R,
\]
where we have defined
\[
\tnn:=(\tn\times\tn) / \pi_1(N,\ast).
\]
Let $U: M\times [0, 1]\to N$ be a continuous relative homotopy between $u$ and $v$.
Since $\tilde{M}$, hence $\tilde{M}\times [0,1]$, is simply connected, $U$ lifts to a homotopy $\tilde{U}$ between $\tilde{u}\left(\cdot\right):=\tilde{U}\left(\cdot,0\right)$ and $\tilde{v}\left(\cdot\right):=\tilde{U}\left(\cdot,1\right)$ relative to $\partial \tilde{M}$. Clearly, $P_N(\tilde{u})=u(P_M)$ and $P_N(\tilde{v})=v(P_M)$.
Since Riemannian coverings are local isometries, $\tu$ and $\tv$ are $p$-harmonic maps and
\[
|d\tu|(\tq)=|du|(P_M(\tq)),\qquad |d\tv|(\tq)=|dv|(P_M(\tq)).
\]
Now, $\pi_1(M,\ast)$ acts as a group of isometries on $\tm$ and we have
\begin{equation}\label{gamma}
\tu(\gamma(\tq))=u_\sharp(\gamma)\tu(\tq),\qquad \tv(\gamma(\tq))=v_\sharp(\gamma)\tv(\tq),\qquad\forall\tq\in\tm,\gamma\in\pi_1(M,\ast),
\end{equation}
where $u_\sharp,v_\sharp:\pi_1(M,\ast)\to\pi_1(N,\ast)$ are the induced homomorphism and $u_\sharp\equiv v_\sharp$ since $u$ is homotopic to $v$.\\
Thus, the map $\tilde j: \tm\to\tn\times\tn$ defined by $\tilde j(\tilde x) := (\tilde u(\tilde x), \tilde v(\tilde x))$ induces via (\ref{gamma}) a map
\[
j: M \to \tnn.
\]
Furthermore, we can construct a vector valued $1$-form $J\in T^\ast M\otimes j^{-1}T\tnn$ along $j$ by projecting via \eqref{gamma} the vector valued $1$-form $\tilde J$ along $\tilde j$ defined as
\[
\tilde J := (\Kp\tu,\Kp\tv)\quad \in T^\ast \tm\otimes \tilde j^{-1}T\left(\tn\times\tn\right).
\]
Here and on, the symbol $\Kp\tu$ stands for
\[
\Kp\tu:=|d\tu|^{p-2}d\tu.
\]
Consider the vector field on $M$ given by
\begin{equation*}
X|_{q}:=\left[d\tir^2|_{j(q)}\circ J|_{q}\right]^{\sharp}.
\end{equation*}
Note that
\begin{equation}\label{field_X'}
X|_{q}:=dP_M|_{\tq} \circ \left.\tilde X\right|_{\tq},
\end{equation}
where
\[
\left.\tilde X\right|_{\tq}:=\left[\left.d\left(\dist_{\tn}^2\right)\right|_{\tilde j(\tq)}\circ \left.\tilde J\right|_{\tq}\right]^{\sharp}.
\]
We claim that \eqref{field_X'} is well defined. To this end, let $S_{\tq}\in T_{\tq}\tm$ be an arbitrary vector and let $\tq'\in P_M^{-1}(q)\subset T\tm$. If $\tq'\neq\tq$, there exists $\gamma\in\pi_1(M,\ast)$ such that $q'=\gamma q$. Then,
\[
\tilde J|_{\gamma\tq}(d\gamma\left( S_{\tq})\right)=\left(d\left[u_{\sharp}(\gamma)\right]\left(\Kp{\tu}(S_{\tq})\right),d\left[v_{\sharp}(\gamma)\right]\left(\Kp{\tv}(S_{\tq})\right)\right).
\]
Since $u$ is homotopic to $v$, $u_{\sharp}=v_{\sharp}$. Moreover $\dist_{\tn}$ is equivariant with respect to the action of $\pi_1(N)$ on $\tn\times\tn$, i.e.
\[
\dist_{\tn}(\beta \tilde x_1, \beta \tilde x_2)=\dist_{\tn}( \tilde x_1, \tilde x_2),\qquad\forall\beta\in\pi_1(N),\ x_1,x_2\in\tn.
\]
Then
\[
dP_M|_{\tq}\circ \left[\left.d\left(\dist_{\tn}^2\right)\right|_{\tilde j(\tq)}\circ \left.\tilde J\right|_{\tq}\right]^{\sharp}
\]
does not depend on the choice of $\tq\in P_M^{-1}(q)$.\\
Now, we recall the following ``convexity'' result of \cite{V}.
\begin{lemma}\label{lem_hess_p}
For all $q\in M$ and for any choice of $\tq\in P_M^{-1}(q)$ we have
\begin{equation}\label{eq_coro}
\tr_\tm {}^{\tn\times \tn}\Hess \dist_{\tn}^2|_{\tilde j(\tq)}\left(d\tilde j,\tilde J\right)\geq 0
\end{equation}
Moreover, having fixed an orthonormal frame $\tilde E_i$ in $T_\tq\tm$, with $i=1,\dots,m$, the equality holds in \eqref{eq_coro} if and only if there are parallel vector fields $Z_i$, defined along the unique geodesic $\gamma_{\tq}$ in $\tn$ joining $\tilde u(\tq)$ and $\tilde v(\tq)$, such that $Z_i(\tu(\tq))=d\tu|_{\tq}(\tilde E_i)$, $Z(\tv(\tq))=d\tv|_{\tq}(\tilde E_i)$ and $\tnmetr{{}^\tn R(Z_i,\dot{\gamma}_{\tq})\dot{\gamma}_{\tq}}{Z_i}\equiv0$ along $\tilde\gamma_{\tq}$. Moreover, $d(\operatorname{\dist}_{\tn}(\tilde j))=0$.\\
In particular, if $\nsect<0$, $Z_i$ is proportional to $\dot{\gamma}_{\tq}$ for each $i=1,\dots,m$.
\end{lemma}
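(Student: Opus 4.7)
The plan is to work pointwise: fix $\tq \in \tm$, and use the Cartan--Hadamard property of $\tn$ to get a unique constant-speed geodesic $\gamma = \gamma_\tq : [0,1] \to \tn$ from $\tu(\tq)$ to $\tv(\tq)$. For the orthonormal frame $\{\tilde E_i\}$ of $T_\tq \tm$, set $A_i := d\tu|_\tq(\tilde E_i)$ and $B_i := d\tv|_\tq(\tilde E_i)$, and let $\mathcal J_i$ be the unique Jacobi field along $\gamma_\tq$ with $\mathcal J_i(0) = A_i$, $\mathcal J_i(1) = B_i$; this is well-defined because $\tn$ has no conjugate points. Writing the scalars $a := |d\tu|^{p-2}(\tq)$, $b := |d\tv|^{p-2}(\tq)$, one has $\tilde J|_\tq(\tilde E_i) = (aA_i, bB_i)$.

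The key ingredient is the classical second-variation identity that expresses the Hessian of $d_{\tn}^2$ on the product of a Cartan--Hadamard manifold with itself as an index form along the connecting geodesic:
\[
\tfrac12\, {}^{\tn\times\tn}\Hess d_{\tn}^2|_{\tilde j(\tq)}\bigl((A_i,B_i), (aA_i, bB_i)\bigr) = I_{\gamma_\tq}(\mathcal J_i, \mathcal K_i),
\]
where $\mathcal K_i$ is the Jacobi field with boundary data $(aA_i, bB_i)$ and $I_\gamma(Y,Z) := \int_0^1 \bigl(\ps{\nabla_t Y}{\nabla_t Z} - \ps{{}^\tn R(Y,\dot\gamma)\dot\gamma}{Z}\bigr)\,dt$. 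By linearity of the Jacobi equation I would decompose $\mathcal K_i = a\mathcal P_i + b\mathcal Q_i$, with $\mathcal P_i$, $\mathcal Q_i$ the Jacobi fields having boundary data $(A_i,0)$ and $(0,B_i)$ respectively. Integrating by parts in $I_{\gamma_\tq}$ and using that $\mathcal J_i$ is Jacobi converts the integrand into boundary terms plus a curvature integral; under $\nsect \leq 0$ the curvature contribution enters the index form with the favorable sign, and completing the square as in \cite{V} makes the sum over $i$ non-negative, which is precisely \eqref{eq_coro}.

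For the equality discussion, tracking back the argument forces simultaneously (i) the curvature integrand $\tnmetr{{}^\tn R(\mathcal J_i, \dot\gamma)\dot\gamma}{\mathcal J_i}$ to vanish identically along $\gamma_\tq$, and (ii) $\nabla_t \mathcal J_i \equiv 0$, so that each $\mathcal J_i$ is a parallel field $Z_i$ along $\gamma_\tq$ with the prescribed endpoint values $Z_i(\tu(\tq)) = A_i$, $Z_i(\tv(\tq)) = B_i$. The identity $d(\operatorname{dist}_{\tn}(\tilde j)) = 0$ then follows from the resulting constancy of $|\mathcal J_i|$ combined with the first-variation formula for the distance function. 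Under the stricter hypothesis $\nsect < 0$, condition (i) forces $Z_i$ proportional to $\dot\gamma_\tq$ for each $i$, which is the final claim. The main obstacle I anticipate is the asymmetric structure of $\tilde J$, with distinct weights $a \neq b$ at the two endpoints of $\gamma_\tq$: this prevents a direct appeal to the convexity of the quadratic form $\Hess d_{\tn}^2(d\tilde j, d\tilde j)$, and the decomposition $\mathcal K_i = a\mathcal P_i + b\mathcal Q_i$ into endpoint-vanishing Jacobi fields, together with the ensuing sign bookkeeping in the index form, is the essential technical step.
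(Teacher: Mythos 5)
First, a caveat on the comparison: the paper does not prove this lemma at all --- it is imported verbatim as ``the following convexity result of \cite{V}'' and used as a black box --- so your sketch can only be measured against the argument in that reference. Your framework is the correct one: on the Cartan--Hadamard manifold $\tn$ the identity
$\tfrac12\,{}^{\tn\times\tn}\Hess \dist_{\tn}^2|_{\tilde j(\tq)}\bigl((A_i,B_i),(aA_i,bB_i)\bigr)=I_{\gamma_\tq}(\mathcal J_i,\mathcal K_i)$
holds with $\mathcal J_i,\mathcal K_i$ the Jacobi fields with the stated boundary data, and you rightly single out the asymmetry $a\neq b$ as the crux.

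The problem is that the one step you defer (``completing the square as in \cite{V} makes the sum over $i$ non-negative'') \emph{is} the lemma, and the route you sketch --- decompose $\mathcal K_i=a\mathcal P_i+b\mathcal Q_i$ and do sign bookkeeping in the index form --- points away from a proof. The inequality is false term-by-term and false for general weights: already in $\mathbb R^n$ one has $\Hess\tfrac12\dist^2((A,B),(aA,bB))=\ps{A-B}{aA-bB}$, which for $m=1$, $B_1=2A_1$, $a=3$, $b=1$ is negative; and even with the correct weights individual summands can be negative (take $p=4$, $m=2$, $A=(2,0)$, $B=(1,10)$ in $\mathbb R$). So no termwise or Cauchy--Schwarz estimate on the $\mathcal P_i,\mathcal Q_i$ pieces can work: one must first sum over $i$ and then exploit that $a=\bigl(\sum_i|A_i|^2\bigr)^{(p-2)/2}$ and $b=\bigl(\sum_i|B_i|^2\bigr)^{(p-2)/2}$, a fact your write-up records but never uses. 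A way to close the gap: for Jacobi fields $I(\mathcal J_i,\mathcal K_i)=\tfrac b2f_i'(1)-\tfrac a2f_i'(0)$ with $f_i=|\mathcal J_i|^2$; the function $F=\sum_if_i$ is convex when $\nsect\leq0$, with $F(0)=|d\tu|^2$ and $F(1)=|d\tv|^2$, and a three-case check on the signs of $F'(0)\leq F'(1)$ gives $bF'(1)-aF'(0)\geq0$ precisely because $s\mapsto s^{(p-2)/2}$ is nondecreasing for $p\geq2$. Granting that, your equality discussion is sound: $F''\equiv0$ forces $\nabla_t\mathcal J_i\equiv0$ and $\ps{{}^\tn R(\mathcal J_i,\dot\gamma_\tq)\dot\gamma_\tq}{\mathcal J_i}\equiv0$, whence $\ps{\mathcal J_i}{\dot\gamma_\tq}$ is constant and $d(\dist_\tn(\tilde j))=0$, and $\nsect<0$ then forces $Z_i\parallel\dot\gamma_\tq$.
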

By the homotopy assumption, for each $q\in\partial M$ and any $\tilde{q} \in P_M^{-1}(q)$ we have $\tilde{u}(\tilde{q})=\tilde{v}(\tilde{q})$, i.e., $\tilde{j}(\tilde{q})\in \tilde D$. In particular, this implies that $\hat{r}^2(j)|_{\partial M}=0$, and, since $d\hat{r}^2=2\hat{r} d\hat{r}$,
\[
X|_{\partial M}=0.
\]
Then, applying the divergence theorem,
\begin{equation}\label{stokes}
\int_M \dive X dV_M= 0.
\end{equation}
On the other hand, by the $p$-harmonicity of $u$ and $v$ and by the isometry property of the coverings projections,
\begin{align}\label{div_hes}
{}^M\dive X|_q &= \tr_M{}^{\tnn}\Hess \tir^2|_{j(q)}\left(dj,J\right) + d\tir^2|_{j(q)}(\dive J|_q)\\&= \nonumber
\tr_M{}^{\tnn}\Hess \tir^2|_{j(q)}\left(dj,J\right)\\
&= \nonumber
\tr_\tm{}^{\tn\times\tn}\Hess \dist_\tn^2|_{\tilde j(\tq)}\left(d\tilde j|_\tq,\tilde J|_\tq\right)
\end{align}
for each $q\in M$ and any $\tq\in P_M^{-1}(q)$. By Lemma \ref{lem_hess_p} we thus get $\dive X\geq 0$ and \eqref{stokes} implies $\dive X\equiv 0$. Thus \eqref{div_hes} holds with the equality sign and the equality conditions in Lemma \ref{lem_hess_p} give $d\left(\dist_\tn\right)(d\tu,d\tv)\equiv 0$. Since $\dist_\tn(\tu,\tv)|_{\partial \tm}=0$ we get $\tu\equiv\tv$ and, projecting on $M$, $u\equiv v$.\\
To conclude the proof, let us remark that in general relations \eqref{stokes} and \eqref{div_hes} has to be considered in the weak sense. Lemma 7 in \cite{V} proves the weak validity of \eqref{div_hes}, i.e.
\begin{align}\label{weak_div}
-\int_M \left[d\tir^2|_{j}\circ J\right]({}^M\nabla\eta)=\int_M \eta\;\tr_M{}^{\tnn}\Hess \tir^2|_{j(q)}\left(dj,J\right)
\end{align}
for all $\eta\in C^{\infty}_0(M)$. Moreover we can choose a $1$-parameter family of smooth cut-off functions $\{\eta_\epsilon\}$ compactly supported  in $\operatorname{int} (M)$ such that $\sup_M|\nabla\eta_\epsilon|=O(\epsilon^{-1})$ as $\epsilon\to 0$ and $\eta_\epsilon(q)=1$ for all $q\in M$ satisfying $\dist_M(q,\partial M)>\epsilon$. Since $X|_{\partial M}\equiv0$, $X$ is continuous and
\[
\vol_M(\{q\in M:\dist_M(q,\partial M)\leq\epsilon\})=O(\epsilon)
\]
as $\epsilon\to 0$, applying \eqref{weak_div} with $\eta=\eta_\epsilon$ and letting $\epsilon\to 0$, we can conclude that the LHS of \eqref{weak_div} tends to $0$. In some sense this gives a weak version of \eqref{stokes}. On the other hand by Lemma \ref{lem_hess_p}, we can apply monotone convergence to the RHS of \eqref{weak_div} to get
\[
\int_M \tr_M{}^{\tnn}\Hess \tir^2|_{j(q)}\left(dj,J\right) =0.
\]
\end{proof}

\part{Cartan-Hadamard targets with special structure}\label{part1}

\section*{Introduction}

Let $(M,g)$ and $(N,h)$ be Riemannian manifolds of dimensions $m$ and $n$ respectively and suppose that $M$ is compact with smooth nonempty boundary. A $C^1$ map $u : \textrm{int} M\to N$ is said to be $p$-harmonic if it satisfies the $p$-Laplace equation
\begin{equation}\label{taupp}
\Delta_p u = \dive (|du|^{p-2}du)=0.
\end{equation}
Here $du\in\Gamma(T^\ast M\otimes u^{-1}TN)$ is a vector valued differential $1$-form and $T^\ast M\otimes u^{-1}TN$ is endowed with its Hilbert-Schmidt scalar product. Moreover $-\dive=\delta$ is the formal adjoint of the exterior differential $d$, with respect to the standard $L^2$ inner product on vector-valued differential $1$-forms on $M$. Equation \eqref{taupp} is the Euler-Lagrange equation of the $p$-energy functional
\[
E_p(u) = \frac 1 p \int_\Omega
|du|_{HS}^p(x)dV_M.
\]
The topic of this paper is
 the Dirichlet problem for $p$-harmonic maps into nonpositively curved target $N$. Namely, given a sufficiently regular boundary datum $f:\partial M\rightarrow N$ the
corresponding Dirichlet problem consists in finding a map $u:M\rightarrow N$ which extends $f$ to a harmonic map on $\mathrm{int}M$.

\medskip

In case the target manifold $N$ is closed (i.e. compact without boundary), in Part 1 we gave a complete solution to the homotopic $p$-Dirichlet, i.e. the solution is found in a prescribed homotopy class. The proof therein is purely variational. Exploiting powerful techniques due to B. White, \cite{Wh-Acta}, one can define the weak relative $d$-homotopy type of $W^{1,p}$ maps, hence minimize
the $p$-energy in the $d$-homotopy class of the initial datum, and finally show how to apply R. Hardt and F.-H. Lin's regularity theory to the minimizer, \cite{HaLi-CPAM}.

In this paper we focus our attention on a non-compact, but topologically trivial, target manifold $N$ of non-positive curvature. Such a manifold $N$ is usually said to be Cartan-Hadamard. Under these assumptions, a solution to the Dirichlet problem has been given by Fuchs, \cite[Theorem 5.1]{Fu2}. See also the more recent \cite{FR-GeoDed}. In order to win the lack of compactness of the target, the proof given in \cite{Fu2} needed to deeply exploit Fuchs' regularity theory for constrained $p$-minimizers. Our main purpose, here, is to show that, even if $N$ is noncompact, one can prove a posteriori a uniform bound for the solution. In particular, thanks to a gluing\&compactification argument, the problem can be reduced to the closed one, at least in case the target is either a surface or rotationally symmetric.

Actually, we feel that the geometric construction introduced in this paper will be useful in more general settings where the analytic problem is  related to different functionals. Indeed, as it will be clear from the proof, the relevant properties of the $p$-energy required by the method we propose are: (a) the solvability of the problem when the target is compact and (b) a maximum principle for regular enough solutions. 

It's worthwhile to remark that one crucial point in the previous works \cite{Fu2,FR-GeoDed} is a quite implicit use of a tight relation between two different notions of bounded Sobolev maps: a first one, that we could call \textit{intrinsic}, is defined in a global coordinate chart of the target space. A second one, somewhat more standard and called \textit{extrinsic}, uses a proper isometric embedding of the target into a Euclidean space of sufficiently large dimension. In a future paper, \cite{PV-preparation}, we shall investigate carefully the relations between these two notions and we will point out some interesting consequences.

The starting point of the present investigation is that
the only interesting case involves target manifolds without compact quotients for, otherwise, the non-compact problem can be
reduced to the compact one where the  machinery alluded to above can be
applied without changes.

\begin{prp}
\label{prop_cocompact}Let $\left(  M,g\right)  $ be a compact, $m$-dimensional
Riemannian manifold with smooth boundary $\partial M\neq\emptyset$ and let
$\left(  N,h\right)  $ be a complete, Riemannian manifold of dimension $n$ such that its universal cover supports a strictly convex exhaustion function. Assume that there exists a subgroup $\Gamma$
of isometries of $N$ acting freely, properly and co-compactly on $N$. Then,
for any $p\geq2$ and for every $f\in C^{0}\left(M,N\right) \cap Lip(\partial M,N)$, the homotopy $p$-Dirichlet problem has a
solution $u\in C^{1\,,\alpha}\left(  \operatorname{int} (M),N\right)  \cap C^{0}\left(
M,N\right)  $. Moreover, the solution is unique provided $N$ has non-positive sectional curvature.
\end{prp}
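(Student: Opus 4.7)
\bigskip
\noindent\textbf{Proof plan.} The plan is to reduce the non-compact problem to the compact one already treated in Theorem~\ref{existence_1} by pushing everything down to the quotient $\bar N := N/\Gamma$ and then lifting back through the covering projection $\pi: N \to \bar N$.

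First I would observe that $\bar N$ inherits from $N$ a complete Riemannian structure making $\pi$ a Riemannian covering. In particular $\bar N$ is compact by assumption, the universal cover of $\bar N$ coincides with the universal cover of $N$ and therefore supports a strictly convex exhaustion function, and if $N$ has non-positive sectional curvature then so does $\bar N$. Since $\pi$ is a local isometry, $\bar f := \pi \circ f$ belongs to $C^0(M,\bar N) \cap Lip(\partial M,\bar N)$, so Theorem~\ref{existence_1} produces a $p$-harmonic map $\bar u \in C^{1,\alpha}(\operatorname{int}(M),\bar N) \cap C^0(M,\bar N)$ which minimizes the $p$-energy in the relative homotopy class of $\bar f$, and which is unique as soon as $\nsect \leq 0$.

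Next, I would lift $\bar u$ to $N$ in the correct relative homotopy class. Choose a continuous relative homotopy $\bar H:[0,1]\times M \to \bar N$ with $\bar H(0,\cdot)=\bar f$, $\bar H(1,\cdot)=\bar u$ and $\bar H(t,\cdot)|_{\partial M}=\bar f|_{\partial M}$. Since $\pi:N \to \bar N$ is a covering map, hence a Hurewicz fibration, the homotopy lifting property applied to the initial lift $f:M\to N$ of $\bar H(0,\cdot)$ yields a continuous $H:[0,1]\times M \to N$ with $\pi\circ H = \bar H$ and $H(0,\cdot)=f$. Uniqueness of path lifting, applied pointwise on $\partial M$ where $t\mapsto \bar H(t,x)$ is constant, forces $H(t,x)=f(x)$ for every $x\in\partial M$ and every $t$. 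Setting $u := H(1,\cdot)$ then gives a continuous map $u:M\to N$ with $u|_{\partial M}=f|_{\partial M}$, $\pi\circ u=\bar u$, and $u$ homotopic to $f$ relative to $\partial M$ through $H$.

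The $p$-harmonicity and regularity of $u$ on $\operatorname{int}(M)$ follow at once from those of $\bar u$: since $\pi$ is a local isometry, around every interior point $x_0$ one can write $u = \sigma \circ \bar u$ for a local isometric section $\sigma$ of $\pi$ defined near $\bar u(x_0)$, so $u \in C^{1,\alpha}$ locally and the $p$-Laplace equation \eqref{taupp} is invariant under the local isometry. For uniqueness under $\nsect\leq 0$, suppose $u_1,u_2$ are two solutions in the relative homotopy class of $f$. Their projections $\pi\circ u_1,\pi\circ u_2$ are then $p$-harmonic maps in the relative homotopy class of $\bar f$, so by the uniqueness clause of Theorem~\ref{existence_1} they coincide. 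Working component by component (using that $\partial M\neq\emptyset$ meets each component, or else applying the argument separately), $u_1$ and $u_2$ are two continuous lifts of the same map which agree on $\partial M$, hence coincide everywhere by uniqueness of lifts.

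The only genuinely non-routine step is the homotopy-lifting argument that produces $u$ in the correct relative homotopy class with the correct boundary trace; the rest is bookkeeping made possible by the fact that $\pi$ is a local Riemannian isometry and therefore transports the $p$-Laplace operator, non-positive curvature and $C^{1,\alpha}$ regularity back and forth between $N$ and $\bar N$.
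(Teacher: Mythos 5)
Your proposal is correct and follows essentially the same route as the paper: pass to the compact quotient $N/\Gamma$, solve there via Theorem~\ref{existence_1}, lift the relative homotopy through the covering (using discreteness of the fibres to keep the boundary fixed), and transport $p$-harmonicity, regularity and uniqueness back and forth via the local isometry. The only cosmetic difference is that you anchor the lifted homotopy at $t=0$ with $f$ while the paper anchors it at $t=1$; the argument is otherwise identical.
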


We aim at facing the general situation where either we have no information on the structure of the isometry group of $N$ or it is known that $N$ has no
compact quotients. 

\begin{thm}\label{existence_2}
Let $\left(  M,g\right)  $ be a compact, $m$-dimensional Riemannian manifold
with smooth boundary $\partial M\neq\emptyset$ and let $N$ be an $n$-dimensional simply connected manifold of non-positive curvature $N^n_\sigma$ which is either rotationally symetric or $2$ dimensional. Then, for any $p\geq2$ and any given
$f\in C^{0}\left(  M,N\right)  \cap Lip\left(  \partial M,N\right)  $, the
$p$-Dirichlet problem%
\begin{equation}\label{p-pb}
\left\{
\begin{array}
[c]{ll}%
\Delta_{p}u=0 & \text{on }M\\
u=f & \text{on }\partial M,
\end{array}
\right.
\end{equation}
has a unique solution $u\in C^{1,\alpha}\left(  \operatorname{int} (M),N\right)  \cap C^{0}\left(  M\right)  $.
\end{thm}
\section{Scheme of the proofs}
We start this section giving the simple proof of Proposition \ref{prop_cocompact}.
\begin{proof}[Proof (of Theorem \ref {prop_cocompact})]
By assumption, $N^{\prime}=N/\Gamma$ is a compact, aspherical Riemannian manifold covered by $N$ via the quotient projection $P:N\rightarrow N^{\prime}$. The
original datum $f$ projects to a new function $P\left(  f\right)
:M\rightarrow N^{\prime}$ which, in turn, can be used to state the
corresponding $p$-Dirichlet problem%
\[
\left\{
\begin{array}
[c]{ll}%
\Delta_{p}u^{\prime}=0 & \text{on }M\\
u^{\prime}=P\left(  f\right)   & \text{on }\partial M.
\end{array}
\right.
\]

Thanks to the analysis of the compact target case provided in Part 1, this problem admits a solution $u^{\prime}\in C^{1\,,\alpha}\left(  \operatorname{int} (M),N^{\prime}\right)
\cap C^{0}\left(  M,N^{\prime}\right)  $ in the homotopy class of $P\left(
f\right)  $ relative to $\partial M$. Let $H^{\prime}:[0,1]\times M\rightarrow N^{\prime}$ be such a homotopy. The classical theory of
fibrations (see e.g. \cite{Hatcher}) then tells us that $H^{\prime}$ lifts to a homotopy $H:[0,1]\times M\rightarrow N$ satisfying $H\left( 1, x\right)  =f\left(  x\right)
$. The homotopy $H$ is relative to $\partial M$
because, for every $y\in\partial M,$ $H\left(  [0,1] \times \left\{  y\right\}
\right)$ is contained in the (discrete) fibre over
$P\left(  f\right)  \left(  y\right)  $. Let $u\left(  x\right)  =H\left(
0,x\right)  .$ Since $P$ is a local isometry and $P(u)=u^{\prime}$, then $u$ is  $p$-harmonic in $M$ \ of
class $C^{1\,,\alpha}\left(  \operatorname{int} (M),N\right)  \cap C^{0}\left(
M,N\right)  $. On the other hand, using the fact that $H$ is relative to
$\partial M$ we deduce that $u=f$ on $\partial M$. This proves that the
original homotopy $p$-Dirichlet problem has a solution. In case $\nsect\leq0$, uniqueness follows
easily from the following few facts: (a) solutions of the homotopy
$p$-Dirichlet problem with target $N$ projects to solutions of the
corresponding problem with target $N^{\prime}$; (b) in case of compact
targets, the solution is unique; (c) liftings are uniquely determined by their
values at a single point.
\end{proof}

The approach we propose to prove Theorem \ref{existence_2} inspires to the reduction procedure used to obtain Proposition
\ref{prop_cocompact}. This latter implies that the Dirichlet problem is easily solved when $N$ has a compact quotient, but this is not the case for a
general Cartan-Hadamard model manifold $N^{n}_{\sigma}$. The possible lack of discrete, co-compact isometry
subgroups is overcome by using a combination of cut\&paste and periodization arguments. Namely, we will show that it is possible to perturb the metric of $N^{n}_{\sigma}$ in the
exterior of a fixed geodesic ball in $N^{n}_{\sigma}$ such that the complete manifold thus obtained is again Cartan-Hadamard and has compact quotients. A new maximum principle
for the composition of the $p$-harmonic map and the convex distance function of $N^{n}_{\sigma}$ then gives that this perturbation does not affects the solution to the original problem.
The uniqueness part of the theorem can be clearly considered as a bypass product of the reduction to the compact case. We recall also that a
comprehensive uniqueness result for general complete targets with non-positive curvature was obtained in Part 1.

To perform the cut\&past procedure we need a local explicit control on the sectional curvatures of $N$. To this purpose, we first focus our attention on rotationally symmetric targets. That is, having fixed a smooth
function $\sigma:[0,+\infty)\rightarrow\lbrack0,+\infty)$ satisfying%
\begin{equation}
\sigma^{(2k)}\left(  0\right)  =0\text{, }\forall k \in \mathbb{N},\quad\sigma^{\prime}\left(  0\right)
=1,\quad\sigma\left(  r\right)  >0\text{, }\forall r>0,\label{model1}%
\end{equation}
we shall denote by $N_{\sigma}^{n}$ the smooth $n$-dimensional Riemannian
manifold given by%
\begin{equation}
\left(  \lbrack0,+\infty)\times\mathbb{S}^{n-1},dr^{2}+\sigma^{2}\left(
r\right)  d\theta^{2}\right)  ,\label{model2}%
\end{equation}
where $d\theta^{2}$ denotes the standard metric on $\mathbb{S}^{n-1}$.
Clearly, $N_{\sigma}^{n}$ is diffeomorphic to $\mathbb{R}^{n}$ and geodesically complete for any choice of $\sigma$. Usually, $N_{\sigma}^{n}$ is
called a model manifold with warping function $\sigma$ and pole $0$. The $r$-coordinate in the expression (\ref{model2}) of the metric represents the
distance from the pole. 
Standard formulas for
warped product metrics reveal that%
\begin{equation}\label{sec_rot}
\sect_{rad}=-\frac{\sigma^{\prime\prime}}{\sigma},\quad \sect_{tg}=\frac{1-\left(
\sigma^{\prime}\right)  ^{2}}{\sigma^{2}}%
\end{equation}
Thus, in particular, the model manifold $N_{\sigma}^{n}$ is Cartan-Hadamard if and only if%
\[
\sigma^{\prime\prime}\geq0.
\]
%

We point out that, when the Cartan-Hadamard target is $2$-dimensional, the first equation in \eqref{sec_rot} defines its Gaussian curvature in polar
coordinates regardless of any rotational symmetry condition. Namely, given a $2$-dimensional
Cartan-Hadamard manifold $(N,h_N)$, in the global geodesic chart $(r,\theta)$ around some fixed pole $o\in N$ the metric $h_N$ can be expressed as
\[
h_N|_{(r,\theta)}=dr^2 + \nu^2(r,\theta)d\theta^2.
\]
Direct computations show that the only
(radial) sectional curvature of $N^2_\nu$ satisfies at any point $(r,\theta)$ the formula
\begin{equation}\label{2-sec}
\sect(r,\theta)=\sect_{rad}(r,\theta)=-\nu^{-1}(r,\theta)\frac{\partial^2\nu(r,\theta)}{\partial r^2}.
\end{equation}

\section{Gluing model manifolds keeping $\sect\leq0$}\label{sec_gluing-curv}

In this Section we show that, in some sense, it is possible to prescribe a
hyperbolic infinity to a Cartan-Hadamard model, as well as to a generic Cartan-Hadamard $2$-manifold, without violating the non-positive
curvature condition.  

%
%
%
%
%
%

\begin{theorem}\label{th_gluing}
Let $N$ be a rotationally symmetric (resp. $2$ dimensional) Cartan-Hadamard manifold. Fix $\bar{R}>0$. Then, for every $R>\bar R$ there exist a
$k=k(R)>>1$ and a Cartan-Hadamard $M_{\tau}^{n}$ such that:

\begin{enumerate}
\item[(i)] $B_{\bar{R}}^{N}\left(  0\right)  \subset M_{\tau}^{n}$.

\item[(ii)] $M_{\tau}^{n}\backslash$ $B_{R}^{M}\left(  0\right)
=\hh_{k}^{n}\backslash$ $B_{R}^{\hh_k^n}\left(  0\right)  $.
\end{enumerate}
\end{theorem}
%
%

\begin{proof}
We prove the theorem in case $N=N_{\rho}^{n}$ is rotationally symmetric. Replacing \eqref{sec_rot} with \eqref{2-sec}, the two dimensional case can be handled in in a completely analogous way.

Thanks to \eqref{sec_rot}, it is enough to produce a warping function $\tau:[0,+\infty)\rightarrow
\lbrack0,+\infty)$ satisfying the following requirements:

\begin{enumerate}
\item[(a)] $\tau(r)=\rho(r)$ on $[0,\bar R).$

\item[(b)] $\tau(r)=\sigma_k:=k^{-1/2}\sinh\left(k^{1/2}r\right)$ on $(R,+\infty)$.

\item[(c)] $\tau^{\prime}\geq1$ and $\tau^{\prime\prime}\geq0$ on
$[0,+\infty)$.
\end{enumerate}

\noindent To this end, let $\bar{R}<R_{1}<R_{2}<R$. By the assumptions on
$\sigma$, we can choose $k=k\left(  R_{1},R_{2}\right)  >0$ large enough so
that%
\begin{equation}\label{weak_rel}
\rho^{\prime}\left(  R_{1}\right)  \leq\frac{\sigma_{k}\left(  R_{2}\right)
-\rho\left(  R_{1}\right)  }{R_{2}-R_{1}}\leq\sigma_{k}^{\prime}\left(
R_{2}\right)  .
\end{equation}
Define%
\[
\tau_{1}\left(  r\right)  =\left\{
\begin{array}
[c]{ll}%
\rho\left(  r\right)   & \text{on }[0,R_{1})\\
\rho\left(  R_{1}\right)  +\frac{\sigma_{k}\left(  R_{2}\right)  -\rho\left(  R_{1}\right)  }{R_{2}-R_{1}}r & \text{on }[R_{1},R_{2}]\\
\sigma_{k}\left(  r\right)   & \text{on }(R_{2},+\infty).
\end{array}
\right.
\]
Then, $\tau_{1}$ is a piecewise smooth, convex function with $\tau_{1}%
^{\prime}\geq1$. To complete the construction of $\tau$, it remains to
smoothing out the angles with a convex function. This can be done using the
approximation procedure described by M. Ghomi in \cite{Gh-PAMS}.
\end{proof}

%
%
%
%

\begin{remark}
{\rm As it is clear from the proof, Theorem \ref{th_gluing} holds for a class of ``external'' manifolds wider than hyperbolic spaces. In fact, all we need is relation \eqref{weak_rel} to hold.\\
}
\end{remark}

\section{Compact hyperbolic manifolds with large injectivity radii}\label{sec_inj}

It is intuitively clear that actions of small discrete groups on a complete
Riemannian manifold give rise to large fundamental domains. The intuition is
confirmed in the next simple result.

\begin{lemma}
\label{lemma_fundom}Let $\left(  N,h\right)  $ be a complete Riemannian
manifold. Suppose that there exists a filtration%
\[
\Gamma_{0}\vartriangleright\Gamma_{2}\vartriangleright\Gamma_{3}%
\vartriangleright\cdot\cdot\cdot\vartriangleright\Gamma_{k}\vartriangleright
\cdot\cdot\cdot\vartriangleright\left\{  1\right\}
\]
of discrete groups $\Gamma_{k}\subset\mathrm{Iso}\left(  N\right)  $ acting
freely and properly on $N$. Then, for every arbitrarily large ball
$B_{R}\left(  p\right)  $, there exists $K>0$ such that the following holds: for every $k>K$ we
find a fundamental domain $\Omega_{k}$ of $\Gamma_{k}$ containing $p$ and
satisfying%
\begin{equation}
B_{R}^{N}\left(  p\right)  \subset\subset\Omega_{k}. \label{tower2}%
\end{equation}

\end{lemma}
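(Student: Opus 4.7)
The plan is to realize $\Omega_{k}$ as a Dirichlet-type fundamental domain based at $p$ for the $\Gamma_{k}$-action, and to reduce the inclusion $B_{R}^{N}(p)\subset\subset \Omega_{k}$ to a lower bound on the quantity
\[
\iota_{k}(p):=\inf_{\gamma\in\Gamma_{k}\setminus\{1\}}d(p,\gamma p).
\]
Concretely, I would set
\[
\Omega_{k}:=\{x\in N:d(x,p)<d(x,\gamma p)\ \text{for every } \gamma\in\Gamma_{k}\setminus\{1\}\}.
\]
Standard arguments show $\Omega_{k}$ is an open set whose $\Gamma_{k}$-translates tile $N$ up to a boundary of measure zero, so it is a fundamental domain. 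Moreover, if $y\in B_{R}^{N}(p)$ and $\gamma\in\Gamma_{k}\setminus\{1\}$, then by the triangle inequality $d(y,\gamma p)\geq d(p,\gamma p)-d(p,y)>\iota_{k}(p)-R$, while $d(y,p)<R$; hence $B_{R}^{N}(p)\subset \Omega_{k}$ as soon as $\iota_{k}(p)>2R$, and since $\overline{B_{R}^{N}(p)}$ is compact this yields the desired compact containment $\subset\subset$.

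The core of the lemma therefore reduces to showing $\iota_{k}(p)\to+\infty$ as $k\to+\infty$. I would argue by contradiction: if not, there exist a constant $C>0$, a subsequence (still denoted by $k$), and elements $\gamma_{k}\in \Gamma_{k}\setminus\{1\}$ with $d(p,\gamma_{k}p)\leq C$ for all $k$. Since $\Gamma_{k}\subset\Gamma_{0}$, all these $\gamma_{k}$ lie in
\[
\Sigma_{C}:=\{\gamma\in\Gamma_{0}:d(p,\gamma p)\leq C\},
\]
which is a \emph{finite} set thanks to the fact that $\Gamma_{0}$ acts properly and discretely on $N$. Hence, passing to a further subsequence, there is a single element $\gamma_{\ast}\neq 1$ with $\gamma_{k}=\gamma_{\ast}\in\Gamma_{k}$ for all $k$ in the subsequence. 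Reading the filtration as stating $\bigcap_{k}\Gamma_{k}=\{1\}$ (which is the natural interpretation of the chain ending with $\{1\}$), we conclude $\gamma_{\ast}=1$, a contradiction.

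Given a prescribed ball $B_{R}^{N}(p)$, I would then fix $K$ so large that $\iota_{k}(p)>2R$ for every $k>K$ (possible by the divergence just established), and use the Dirichlet domain $\Omega_{k}$ defined above. The main conceptual step is the divergence of $\iota_{k}(p)$; all the other verifications (openness of $\Omega_{k}$, tiling property, reduction via triangle inequality) are standard. The only potential subtlety is interpreting the hypothesis: one must read the chain $\Gamma_{0}\triangleright\Gamma_{2}\triangleright\cdots\triangleright\{1\}$ as forcing $\bigcap_{k}\Gamma_{k}=\{1\}$, since without this the conclusion can clearly fail (e.g.\ if the filtration stabilizes).
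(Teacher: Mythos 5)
Your proposal is correct and follows essentially the same route as the paper: both use the Dirichlet domain centered at $p$, reduce the containment $B_{R}^{N}(p)\subset\subset\Omega_{k}$ to the statement that no nontrivial $\gamma\in\Gamma_{k}$ moves $p$ by less than about $2R$, observe via properness of the $\Gamma_{0}$-action that only finitely many elements of $\Gamma_{0}$ could violate this, and then discard them for large $k$ using $\bigcap_{k}\Gamma_{k}=\{1\}$ (the paper likewise reads the filtration this way). Your reformulation via the divergence of $\iota_{k}(p)=\inf_{\gamma\neq 1}d(p,\gamma p)$ is just a clean repackaging of the same argument.
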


\begin{proof}
Let $D_{k}\left(  p\right)  $ be the Dirichlet domain of $\Gamma_{k}$ centered
at $p$. Recall that $D_{k}\left(  p\right)  =\cap_{\gamma\in\Gamma_{k}%
}H_{\gamma}\left(  p\right)  $ where%
\[
H_{\gamma}\left(  p\right)  =\left\{  x\in N:d_{N}\left(  x,p\right)
<d_{N}\left(  x,\gamma\cdot p\right)  \right\}  .
\]
One can easily verify that if $B_{R}^{N}\left(  p\right)  \cap(N\backslash
D_{k}\left(  p\right)  )\neq\emptyset$ then%
\begin{equation}
B_{R}^{N}\left(  p\right)  \cap\gamma\cdot B_{R}^{N}\left(  p\right)
\neq\emptyset, \label{tower3}%
\end{equation}
for some $\gamma\in\Gamma_{k}\subset\Gamma_{0}$. Since $\Gamma_{0}$ acts
properly on $N$ \ it follows that (\ref{tower3}) can be satisfied for at most
a finite number of $\gamma_{1},...,\gamma_{N}\in\Gamma_{0}$. To conclude the
validity of (\ref{tower2}), we now use that $\cap\Gamma_{k}=\left\{
1\right\}  $ and, therefore, $\gamma_{1},...,\gamma_{N}\notin\Gamma_{k},$
\ for every large enough $k.$
\end{proof}

A case of special interest is obtained by taking $N=\mathbb{H}_{-k^{2}}^{n},$
the standard hyperbolic spaceform of constant curvature $-k^{2}<0$. If
$\Gamma$ is a co-compact discrete group of isometries acting freely and
properly on $\mathbb{H}_{-k^{2}}^{n}$, the corresponding Riemannian orbit
space $\mathbb{H}_{-k^{2}}^{n}/\Gamma$ is named a compact hyperbolic manifold
(of constant curvature $-k^{2}$). 
The existence of a co-compact discrete group of
isometries of $\mathbb{H}_{-k^{2}}^{n}$ with large fundamental domain is equivalent to the existence of a compact
quotient manifold with large injectivity radius. 
%
The following result was first observed in
\cite{Fa-lectures}, see p.74.

\begin{proposition}
\label{prop_inj}Let $n\geq0$, $R>0$ and $p\in\mathbb{H}_{-k^{2}}^{n}$. Then,
there exists a co-compact, discrete group $\Gamma$ of isometries of
$\mathbb{H}_{-k^{2}}^{n}$ acting freely and properly on $\mathbb{H}_{-k^{2}%
}^{n}$ and whose fundamental domain $\Omega$ containing $p$ satisfies%
\[
\mathbb{B}_{R}\left(  p\right)  \subset\subset\Omega.
\]
Equivalently,%
\[
\operatorname{inj}\left(  \mathbb{H}_{-k^{2}}^{n}/\Gamma\right)  \geq R.
\]

\end{proposition}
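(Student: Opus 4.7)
The plan is to realize $\Gamma$ as a term in a nested filtration of finite-index normal subgroups of a fixed torsion-free, co-compact lattice, and then apply Lemma \ref{lemma_fundom} directly.

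First, I would fix a torsion-free, co-compact discrete subgroup $\Gamma_0\subset\mathrm{Iso}(\mathbb{H}^n_{-k^2})$. Existence of a co-compact lattice in $\mathrm{Iso}(\mathbb{H}^n_{-k^2})$ is classical (arithmetic constructions of Borel work in every dimension; in low dimensions one has more elementary recipes), and an application of Selberg's lemma supplies a torsion-free subgroup of finite index, still co-compact. Since every non-identity isometry of $\mathbb{H}^n$ with a fixed point has finite order, a torsion-free discrete group necessarily acts freely; properness of the action is automatic for discrete groups acting by isometries on a Riemannian manifold.

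Second, I would build a nested sequence $\Gamma_0\triangleright\Gamma_1\triangleright\cdots$ of finite-index normal subgroups of $\Gamma_0$ with trivial intersection. This is the point at which residual finiteness enters: $\Gamma_0$ is finitely generated and linear, being a discrete subgroup of the matrix group $\mathrm{O}(n,1)$, hence residually finite by Malcev's theorem. Enumerating $\Gamma_0\setminus\{1\}=\{\gamma_1,\gamma_2,\ldots\}$ and inductively choosing finite-index normal subgroups $N_j\triangleleft\Gamma_0$ with $\gamma_j\notin N_j$, one sets $\Gamma_k=\bigcap_{j=1}^{k}N_j$. Each $\Gamma_k$ is of finite index in $\Gamma_0$, hence still co-compact, and is torsion-free as a subgroup of $\Gamma_0$, so it acts freely and properly; by construction $\bigcap_k\Gamma_k=\{1\}$.

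Third, I would invoke Lemma \ref{lemma_fundom} with this filtration and the prescribed ball $B_R(p)$: it produces $K>0$ such that the Dirichlet fundamental domain $\Omega_k$ of $\Gamma_k$ centered at $p$ satisfies $B_R(p)\subset\subset\Omega_k$ for every $k>K$. Setting $\Gamma=\Gamma_{K+1}$ and $\Omega=\Omega_{K+1}$ delivers the first assertion of the proposition. The equivalent reformulation in terms of the injectivity radius at the image of $p$ in $\mathbb{H}^n_{-k^2}/\Gamma$ is then immediate from the general discussion recalled just before Proposition \ref{prop_inj}, since the covering projection is an isometry on $\Omega$ and the exponential map at $p$ is an isometry on its domain of injectivity in $\mathbb{H}^n_{-k^2}$.

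The hard part is not geometric but rather the invocation of the two non-trivial group-theoretic inputs, namely the existence of torsion-free co-compact lattices in $\mathrm{Iso}(\mathbb{H}^n_{-k^2})$ and their residual finiteness via Malcev. Once these are accepted as black boxes, the whole statement reduces, without further analytic work, to the abstract fundamental-domain estimate already established in Lemma \ref{lemma_fundom}.
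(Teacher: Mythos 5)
Your proposal is correct and follows essentially the same route as the paper: Borel's existence of a co-compact discrete group acting freely and properly, Malcev's residual finiteness to produce a filtration of finite-index subgroups with trivial intersection, and then Lemma \ref{lemma_fundom} combined with the injectivity-radius discussion preceding the proposition. You merely spell out details the paper leaves implicit (Selberg's lemma for torsion-freeness and the explicit inductive construction of the nested normal subgroups).
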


\begin{proof}
By a result of A. Borel \cite{Borel-Topo}, $\mathbb{H}_{-k^{2}}^{n}$ has a co-compact, discrete
group of isometries $\Gamma_{0}$ acting freely and properly. According to a
result by A. Malcev, $\Gamma_{0}$ is residually finite, i.e., there exists a
filtration%
\[
\Gamma_{0}\vartriangleright\Gamma_{2}\vartriangleright\Gamma_{3}%
\vartriangleright\cdot\cdot\cdot\vartriangleright\Gamma_{k}\vartriangleright
\cdot\cdot\cdot\vartriangleright\left\{  1\right\}
\]
satisfying \thinspace$\lbrack\Gamma_{k}:\Gamma_{k-1}]=\left\vert \Gamma
_{k}/\Gamma_{k-1}\right\vert <+\infty$. To conclude, we now apply Lemma
\ref{lemma_fundom}.
\end{proof}

\section{A maximum principle for $p$-harmonic maps}\label{max}

It is well known, and an easy consequence of the composition law of the
Hessians, that by composing a harmonic map $u:M\rightarrow N$ with a convex
function $h:N\rightarrow\mathbb{R}$ gives a subharmonic function $v=h\circ
u:M\rightarrow\mathbb{R}$, i.e., $\Delta v\geq0$. In particular, if $M$ is
compact with smooth boundary $\partial M\neq0$ and $N$ is Cartan-Hadamard, we
can choose $h\left(  x\right)  =d_{N}^{2}\left(  x,o\right)  $ and apply the
usual maximum principle to conclude that the image $u\left(  M\right)  \subset
N$ is confined in a ball $B_{R}^{N}\left(  o\right)  $ of radius $\ R>0$
depending only on the values of $u$ on $\partial\Omega$, namely,
$R=\max_{\partial\Omega}d_{N}\left(  u,o\right)  $. It was
proved in \cite{Ve-Manuscripta} that, in general, the nice composition
property of harmonic maps does not extend to $p$-harmonic maps, $p>2$.
Nevertheless, we are able to recover the above conclusion \ thus establishing
a new maximum principle for the composition of a $p$-harmonic map and a convex function.

\begin{theorem}\label{th_mp}Let $M$ be a compact Riemannian manifold
with boundary $\partial M\neq\emptyset$, and let $u\in C^1(M,N)$ be a
$p\left( >1\right)  $-harmonic map. Assume that $N$ supports a smooth convex
function $f:N\rightarrow\mathbb{R}$. Set $w=f\circ u:M\rightarrow\mathbb{R}$.
Then%
\[
\sup_{M}w=\sup_{\partial M}w.
\]

\end{theorem}

\begin{proof}
We give the proof for $p\geq 2$ without taking care of the regularity issues. Similar distributional computations permit to deal with the general case. 

Let $w^{\ast}=\sup_{\partial M}w$ and, by contradiction, suppose that
$w\left(  x_{0}\right)  >w^{\ast}$ for some $x_0\in\operatorname{int} (M)$. Fix $0<\varepsilon<<1$ so that $w\left(
x_{0}\right)  -w^{\ast}>2\varepsilon$. \ Let $\lambda:\mathbb{R}%
\rightarrow\lbrack0,1]$ \ satisfy $\lambda^{\prime}\geq0$, $\lambda^{\prime
}>0$ on $\left(  \varepsilon,+\infty\right)  $, $\lambda=0$ on $(-\infty
,\varepsilon]$. Define the vector field%
\[
Z=\left\vert du\right\vert ^{p-2}\lambda(w-w^{\ast})\nabla w
\]
and note that \textrm{supp}$Z\subset\operatorname{int} (M)$. Direct computations show
that%
\begin{align*}
\operatorname{div}Z &  =\lambda^{\prime}\circ(w-w^{\ast})\left\vert du\right\vert
^{p-2}\left\vert \nabla w\right\vert ^{2}\\
&  +\lambda\circ(w-w^{\ast})\operatorname{tr}\operatorname{Hess}\left(  f\right)  \left(  \left\vert
du\right\vert ^{p-2}du,du\right)  \\
&  +\lambda\circ(w-w^{\ast})df\left(  \Delta_{p}u\right)  \\
&  \geq\left\vert \nabla w\right\vert ^{2}\left\vert du\right\vert ^{p-2}%
\lambda^{\prime}\circ(w-w^{\ast}),
\end{align*}
and applying the divergence theorem we get%
\[
0\leq\int_{M}\left\vert \nabla w\right\vert ^{2}\left\vert du\right\vert
^{p-2}\lambda^{\prime}\circ(w-w^{\ast})\leq\int_{M}\operatorname{div}Z=0.
\]
This proves that%
\begin{equation}
\left\vert \nabla w\right\vert ^{2}\left\vert du\right\vert ^{p-2}=0\text{ on
}M_{\varepsilon},\label{mp1}%
\end{equation}
where we have denoted with $M_\epsilon$ the connected component containing $x_0$ of the open set
\texttt{}\[
\left\{  x\in M:w-w^{\ast}-\varepsilon>0\right\}  .
\]
Since, by (\ref{mp1}), $dw=df\left(  du\right)  =0$ where $du\neq0$ and
$dw=df\left(  du\right)  =0$ \ where $du=0$, it follows that $w$ is constant on
$M_{\varepsilon}$ \ and this easily gives the desired contradiction.
\end{proof}

\section{Proof of the main results}\label{sec_main}

In this last Section we put all the previous ingredients together to get a proof of Theorems \ref{existence_2}.\medskip

The boundary datum $f$ has image confined in a ball $B_{R_{0}}^{N}\left(
0\right)  $ of $N^{n}$. Using Theorem \ref{th_gluing}, we glue
$B_{R_{0}}^{N}\left(  0\right)  $ to the exterior of a large ball in the
hyperbolic spaceform $\mathbb{H}_{-k^{2}}^{n}$ of sufficiently negative curvature
$-k^{2}<<-1$, say $\mathbb{H}_{-k^{2}}^{n}\backslash\mathbb{B}_{R_{1}}\left(
0\right)  $, $R_{1}>>R_{0}$, thus obtaining a new Cartan-Hadamard rotationnally symmetric (resp. $2$ dimenstional) manifold $\left(
N^{\prime},h^{\prime}\right)  $. On the
other hand, by Proposition \ref{prop_inj}, $\mathbb{H}_{-k^{2}}^{n}$ has
compact quotients with arbitrarily large injectivity radii. Accordingly, we
can choose a discrete subgroup $\Gamma$ of isometries acting freely and
co-compactly on $\mathbb{H}_{-k^{2}}^{n}$ in such a way that $\mathbb{B}%
_{R_{1}}\left(  0\right)  $ is contained in a relatively compact, fundamental
domain of the action, say $\mathbb{B}_{R_{1}}\left(  0\right)  \subset
\subset\Omega$. Making use of $\Gamma$ we extend the deformed metric of $\bar{\Omega}$ periodically thus obtaining a new Riemannian manifold $N^{\prime \prime}$ diffeomorphic to $\mathbb{H}^m_{-k^2}$. More precisely, the metric $h^{\prime \prime}$ of $N^{\prime \prime}$ is defined by setting
\[
h_{\gamma\cdot p}^{\prime\prime}=\left(  \gamma^{-1}\right)  _{\gamma\cdot p}^{\ast}%
h_{p}^{\prime}.
\]
Since $h^{\prime}$ is hyperbolic in a neighborhood of $\partial\Omega$, the
definition of $h^{\prime\prime}$ is well posed. Moreover, $\left(
N^{\prime\prime},h^{\prime\prime}\right)  $ has non-positive curvature, hence it is Cartan-Hadamard, and, by
construction, $\Gamma$ acts freely and co-compactly by isometries on $N''$. In
particular, each copy of $\Omega$ contains an isometric image of $B_{R_{0}%
}^{N}\left(  0\right)  $. Now, we take the quotient manifold $N^{\prime\prime
}/\Gamma$ which is compact and covered by
$N^{\prime\prime}$ via the quotient projection $P:N^{\prime\prime}\rightarrow
N^{\prime\prime}/\Gamma$. By construction, the original datum $f$ well defines $f^{\prime\prime
}=f:M\rightarrow N^{\prime\prime}$. Applying Proposition \ref{prop_cocompact} we get a unique solution $u^{\prime \prime} \in C^0(M,N^{\prime \prime}) \cap C^{1,\alpha}(\operatorname{int} (M),N^{\prime \prime})$ to the Dirichlet problem
\[
\left\{
\begin{array}
[c]{ll}%
\Delta_{p}u^{\prime\prime}=0 & \text{on }M\\
u^{\prime\prime}=f^{\prime\prime} & \text{on }\partial M.
\end{array}
\right.
\]
To complete the argument, it remains to show that, actually, $u^{\prime\prime
}$ gives rise to a solution of the original problem. This clearly follows if
we are able to show that its image is confined in $B_{R_{0}}^{N}\left(
0\right)  \subset N^{\prime\prime}$. To prove that this is the case, we recall that $N^{\prime\prime}$ is Cartan-Hadamard and, therefore, the
function $d_{N^{\prime\prime}}^{2}\left(  y,0\right)  $ is smooth and strictly convex. By means of Theorem
\ref{th_mp}, we deduce that $d_{N^{\prime\prime}}^{2}\left(  u^{\prime\prime
},0\right)  $ achieves its maximum on $\partial M$. To conclude, it suffices
to recall that $f\left(  M\right)  \subset B_{R_{0}}^{N}\left(  0\right)  $
and to use the equality $u^{\prime\prime}=f$ on $\partial M$.

%
\acknowledgement\nonumber
We are indebted to Fran\c{c}ois Fillastre for some conversations concerning closed hyperbolic manifolds which have revealed very useful to the draft of Subsection \ref{sec_inj}.\\
The second author was partially supported by the \textit{INdAM Fellowships in Mathematics and/or Applications for Experienced Researchers cofunded by Marie
Curie} and by the \textit{Gruppo Nazionale per l'Analisi Matematica, la Probabilit\`a e le loro Applicazioni (GNAMPA)}.
\bigskip

\end{document}